\xpatchcmd{\@sect}{\uppercase}{\MakeTextUppercase}{}{}
\xpatchcmd{\@sect}{\uppercase}{\MakeTextUppercase}{}{}
\def\mod{\mathrm{mod}}
\def\ind{\mathrm{ind}}
\def\LCXX{{\cal L}ie_{\mathbb C} \langle\!\langle X \rangle\!\rangle}
\def\LCX{{\cal L}ie_{\mathbb C} \langle X \rangle}
\def\scal#1#2{\langle #1\bv#2 \rangle}
\def\ncp#1#2{#1\langle #2\rangle}
\def\ncs#1#2{#1\langle \!\langle #2\rangle \!\rangle}
\newcommand{\calA}{{\mathcal A}}
\newcommand{\calH}{{\mathcal H}}
\newcommand{\calC}{{\mathcal C}}
\newcommand{\calJ}{{\mathcal J}}
\newcommand{\N}{{\mathbb N}}
\newcommand{\Z}{{\mathbb Z}}
\newcommand{\Q}{{\mathbb Q}}
\newcommand{\C}{{\mathbb C}}
\newcommand{\Frac}[2]{\displaystyle \frac{#1}{#2}}
\newcommand{\Sum}[2]{\displaystyle{\sum_{#1}^{#2}}}
\newcommand{\Lim}[1]{\displaystyle{\lim_{#1}\ }}
\def\pol#1{\langle #1 \rangle}
\def\Lyn{{\mathcal Lyn}}
\def\CX{\C \langle X \rangle}
\def\CY{\C \langle Y \rangle}
\def\abs#1{|#1|}
 \def\shuffle{\mathop{_{^{\sqcup\!\sqcup}}}} 
\gdef\stuffle{\;% 
  \setlength{\unitlength}{0.0125cm}% 
  \begin{picture}(20,10)(220,580) 
  \thinlines 
  \put(220,592){\line( 0,-1){ 10}} 
  \put(220,582){\line( 1, 0){ 20}} 
  \put(240,582){\line( 0, 1){ 10}} 
  \put(230,592){\line( 0,-1){ 10}} 
  \put(225,587){\line( 1, 0){ 10}} 
  \end{picture}\; 
}
\newtheorem{corollary}{Corollary}
\newtheorem{proposition}{Proposition}
\newtheorem{theorem}{Theorem}
\newtheorem{lemma}{Lemma}
\newtheorem{definition}{Definition}
\newtheorem{example}{Example}
\newtheorem{remark}{Remark}
\newtheorem{diagram}{Diagram}
\newcommand{\Li}{\operatorname{Li}}
\def\L{\mathrm{L}}
\def\H{\mathrm{H}}
\def\deg{\mathrm{deg}}
\newcommand{\poly}[2]{#1 \langle #2 \rangle}
\def\CX{\poly{\C}{X}}
\def\QY{\poly{\Q}{Y}}
\def\CY{\poly{\C}{Y}}
\newcommand{\serie}[2]{#1 \langle \! \langle #2 \rangle \! \rangle}
\def\CXX{\serie{\C}{X}}
\def\QY_0{\Q\left\langle{Y_0}\right\rangle}
\def\pol#1{\langle #1 \rangle}
\def\Lie{{\cal L}ie}
\def\CX{\C \langle X \rangle}
\def\CXX{\serie{\C}{X}}
 \newcommand{\calB}{{\cal B}}
\def\Lim{\displaystyle\lim}
\def\Sum{\displaystyle\sum}
\def\Frac{\displaystyle\frac}
\def\path{\rightsquigarrow}
\def\resg{\triangleleft}
\def\resd{\triangleright}
\def\bv{\mid}
\def\abs#1{\bv\!#1\!\bv}
\gdef\minishuffle{{\scriptstyle \shuffle}}
\def\rd{\triangleright}
\def\rg{\triangleleft}
\def\deg{\mathop\mathrm{deg}\nolimits}
\def\binom#1#2{{#1\choose#2}}
\pgfplotsset{compat=newest}
\def\scal#1#2{\langle #1\bv#2 \rangle}
\def\ncp#1#2{#1\langle #2\rangle}
\def\ncs#1#2{#1\langle \!\langle #2\rangle \!\rangle}
\begin{document}

% Copyright
\setcopyright{acmcopyright}
%\setcopyright{acmlicensed}
%\setcopyright{rightsretained}
%\setcopyright{usgov}
%\setcopyright{usgovmixed}
%\setcopyright{cagov}
%\setcopyright{cagovmixed}

% ISBN
%\isbn{123-4567-24-567/08/06}

%Conference
%\conferenceinfo{PLDI '13}{June 16--19, 2013, Seattle, WA, USA}

%\acmPrice{\$15.00}
\begingroup
\count0=\time \divide\count0by60 % Hour
\count2=\count0 \multiply\count2by-60 \advance\count2by\time
% Min
\def\2#1{\ifnum#1<10 0\fi\the#1}
\xdef\isodayandtime{
%\centerline
{\2\day-\2\month-\the\year\space\2{\count0}:%
\2{\count2}}}
\endgroup

% DOI
%\doi{20/01/16}

%
% --- Author Metadata here ---
%\conferenceinfo{WOODSTOCK}{'97 El Paso, Texas USA}
%\CopyrightYear{2007} % Allows default copyright year (20XX) to be over-ridden - IF NEED BE.
%\crdata{0-12345-67-8/90/01}  % Allows default copyright data (0-89791-88-6/97/05) to be over-ridden - IF NEED BE.
% --- End of Author Metadata ---

\title{\bf\LARGE{The algebra of Kleene stars of the plane and polylogarithms}}

% You need the command \numberofauthors to handle the 'placement
% and alignment' of the authors beneath the title.
%
% For aesthetic reasons, we recommend 'three authors at a time'
% i.e. three 'name/affiliation blocks' be placed beneath the title.
%
% NOTE: You are NOT restricted in how many 'rows' of
% "name/affiliations" may appear. We just ask that you restrict
% the number of 'columns' to three.
%
% Because of the available 'opening page real-estate'
% we ask you to refrain from putting more than six authors
% (two rows with three columns) beneath the article title.
% More than six makes the first-page appear very cluttered indeed.
%
% Use the \alignauthor commands to handle the names
% and affiliations for an 'aesthetic maximum' of six authors.
% Add names, affiliations, addresses for
% the seventh etc. author(s) as the argument for the
% \additionalauthors command.
% These 'additional authors' will be output/set for you
% without further effort on your part as the last section in
% the body of your article BEFORE References or any Appendices.

\numberofauthors{3} %  in this sample file, there are a *total*
% of EIGHT authors. SIX appear on the 'first-page' (for formatting
% reasons) and the remaining two appear in the \additionalauthors section.
%
\author{
% You can go ahead and credit any number of authors here,
% e.g. one 'row of three' or two rows (consisting of one row of three
% and a second row of one, two or three).
%
% The command \alignauthor (no curly braces needed) should
% precede each author name, affiliation/snail-mail address and
% e-mail address. Additionally, tag each line of
% affiliation/address with \affaddr, and tag the
% e-mail address with \email.
%
% 1st. author
%\alignauthor
G\'erard H. E. Duchamp\\
      \affaddr{Universit\'e Paris Nord}\\
       \affaddr{99, av. J-B Cl\'ement}\\
       \affaddr{93430 Villetaneuse, France}\\
      \email{gerard.duchamp@lipn.univ-paris13.fr}
% 2nd. author
\alignauthor
Hoang Ngoc Minh\\
      \affaddr{Universit\'e de Lille 2}\\
      \affaddr{1 Place D\'eliot}\\
      \affaddr{59000 Lille, France}\\
       \email{hoang@univ.lille2.fr}
% 2nd. author
\alignauthor
Ngo Quoc Hoan\\
      \affaddr{Universit\'e Paris Nord}\\
      \affaddr{99, av. J-B Cl\'ement}\\
      \affaddr{93430 Villetaneuse, France}\\
       \email{quochoan\_ngo@yahoo.com.vn}}
%\and  % use '\and' if you need 'another row' of author names
% 4th. author
%\alignauthor Lawrence P. Leipuner\\
 %      \affaddr{Brookhaven Laboratories}\\
  %     \affaddr{Brookhaven National Lab}\\
   %    \affaddr{P.O. Box 5000}\\
    %   \email{lleipuner@researchlabs.org}
% 5th. author
%\alignauthor Sean Fogarty\\
 %      \affaddr{NASA Ames Research Center}\\
  %     \affaddr{Moffett Field}\\
   %    \affaddr{California 94035}\\
    %   \email{fogartys@amesres.org}
% 6th. author
%\alignauthor Charles Palmer\\
 %      \affaddr{Palmer Research Laboratories}\\
  %     \affaddr{8600 Datapoint Drive}\\
   %    \affaddr{San Antonio, Texas 78229}\\
    %   \email{cpalmer@prl.com}
%}
% There's nothing stopping you putting the seventh, eighth, etc.
% author on the opening page (as the 'third row') but we ask,
% for aesthetic reasons that you place these 'additional authors'
% in the \additional authors block, viz.
%\additionalauthors{Additional authors: John Smith (The Th{\o}rv{\"a}ld Group,
%email: {\texttt{jsmith@affiliation.org}}) and Julius P.~Kumquat
%(The Kumquat Consortium, email: {\texttt{jpkumquat@consortium.net}}).}
%\date{20/01/16}
% Just remember to make sure that the TOTAL number of authors
% is the number that will appear on the first page PLUS the
% number that will appear in the \additionalauthors section.\isoday
%\date{20_1-2016}
\maketitle
\begin{abstract}
%% Text of abstract
We extend the definition and study the algebraic properties of the polylogarithm $\Li_T$,
where $T$ is rational series over the alphabet $X=\{x_0, x_1\}$
belonging to $(\ncp{\C}{X}\shuffle\ncs{\C^\mathrm{rat}}{x_0}\shuffle \ncs{\C^\mathrm{rat}}{x_1}, \shuffle, 1_{X^*})$. 
\end{abstract}
%
% The code below should be generated by the tool at
% http://dl.acm.org/ccs.cfm
% Please copy and paste the code instead of the example below. 
%
%
% End generated code
%

%
%  Use this command to print the description
%
\printccsdesc

% We no longer use \terms command
%\terms{Theory}

\keywords{Algebraically independent; Polylogarithms; Transcendent.}

\section{ Introduction}
In all the sequel of this text,
\begin{enumerate}
\item We consider the differential forms
\begin{eqnarray*}
\omega_0(z)=\frac{dz}z &\mbox{and}& \omega_1(z)=\frac{dz}{1-z}.
\end{eqnarray*}
We denote $\Omega$ the cleft plane ${\mathbb C}-(]-\infty,0]\cup[1,+\infty[)$ 
and $\lambda$ the rational fraction $z(1-z)^{-1}$ belonging to the differential unitary ring
$\calC:={\mathbb C}[z,z^{-1},(1-z)^{-1}]$ with the differential operator $\partial_z:={d}/{dz}$
and with the unitary element
\begin{eqnarray*}
1_{\Omega}:\Omega &\longrightarrow &{\mathbb C},\\
z&\longmapsto &1.
\end{eqnarray*}

\item We construct, over the alphabets
\begin{eqnarray*}
X=\{x_0,x_1\},\quad Y=\{y_k\}_{k\ge1}& \mbox{and}& Y_0=Y\cup\{y_0\}, 
\end{eqnarray*}
totally ordered by $x_0<x_1$ and $y_0>y_1>\cdots$ respectively, 
the  bialgebras
\footnote{Which are all Hopf save the last one due to $y_0$ which is infiltration-like \cite{BDHHT}.} 
\begin{eqnarray*}
(\CX,{\tt conc},\Delta_{\shuffle},1_{X^*},\varepsilon),\\
(\CY,{\tt conc},\Delta_{\stuffle},1_{Y^*},\varepsilon),\\
(\C\langle Y_0\rangle,{\tt conc},\Delta_{\stuffle},1_{Y_0^*},\varepsilon).
\end{eqnarray*}
These algebras, when endowed with their dual laws, are equipped with pure transcendence bases in bijection with the set of Lyndon words
$\Lyn (X),$ $\Lyn (Y)$ and $\Lyn (Y_0)$ respectively. 

Let us consider also the following morphism
\begin{eqnarray*}
\pi^{\circ}_Y:(\C\oplus\CX x_1,{\tt conc})&\longrightarrow&(\CY,.),\\
x_0^{s_1-1}x_1\ldots x_0^{s_r-1}x_1&\longmapsto&y_{s_1}\ldots y_{s_r},
\end{eqnarray*}
for $r\ge1$ and, for any $a\in\C,\pi^{\circ}_Y(a)=a$.
The extension of $\pi^{\circ}_Y$ over $\CX$ is denoted by $\pi_Y :\CX \longrightarrow \CY$
satisfying, for any $p\in\CX x_0$, $\pi_Y(p)=0$. Hence,
\begin{eqnarray*}
\ker(\pi_Y)=\CX x_0&\mbox{and}&\mbox{Im}\;(\pi_Y)=\CY.
\end{eqnarray*}
Let $\pi_X$ be the inverse of $\pi^{\circ}_Y$~:
\begin{eqnarray*}
\pi_X:\CY&\longrightarrow&\C\oplus\CX x_1,\\
y_{s_1}\ldots y_{s_r}&\longmapsto&x_0^{s_1-1}x_1\ldots x_0^{s_r-1}x_1.
\end{eqnarray*}
The projectors $\pi_X$\footnote{With a little abuse of language, $\pi_X$ is now considered as targeted to $\ncp{\C}{X}$.} and $\pi^{\circ}_Y$ are mutual adjoints:
\begin{eqnarray*}
\forall p\in\CX,\forall q\in\CY,&&\scal{\pi_Y(p)}{q}=\scal{p}{\pi_X(q)}.
\end{eqnarray*}
\end{enumerate}

In continuation of \cite{GHMposter,QED}, the principal object of the present work is the {\it polylogarithm} well defined,
for any $r$-uplet $(s_1,\ldots,s_r)\in\C^r$, $r\in\N_+$ and for any $z\in\C$ such that $\abs{z}<1$, as follows
\begin{eqnarray}\label{polylogarithm}
\Li_{s_1,\ldots,s_r}(z):=\Sum_{n_1>\ldots>n_r>0}\frac{z^{n_1}}{n_1^{s_1}\ldots n_r^{s_r}}.
\end{eqnarray}
Then the Taylor expansion of the function $(1-z)^{-1}\Li_{s_1,\ldots,s_r}(z)$ is given by
\begin{eqnarray*}
\Frac{\Li_{s_1,\ldots,s_r}(z)}{1-z}=\Sum_{N\ge0}\H_{s_1,\ldots,s_r}(N)\;z^N,
\end{eqnarray*}
where the coefficient $\H_{s_1,\ldots,s_r}:\N\longrightarrow\Q$ is an arithmetic function,
also called {\it harmonic sum}, which can be expressed as follows
\begin{eqnarray}\label{harmonicsum}
\H_{s_1,\ldots,s_r}(N):=\Sum_{N\ge n_1>\ldots>n_r>0}\Frac1{n_1^{s_1}\ldots n_r^{s_r}}.
\end{eqnarray}

From the analytic continuation of polyzetas \cite{Goncharov,Zhao}, for any $r \geq 1$, if $(s_1,\ldots,s_r)\in\calH_r$ satisfies (\ref{domaine}) then\footnote{For $r \geq 1$, $\zeta(s_1,\ldots,s_r)$ is as a meromorphic function on 
\begin{eqnarray}\label{domaine}
\calH_r=\{(s_1,\ldots,s_r)\in\C^r\vert\forall m =1,\ldots,r,\Re(s_1)+\ldots+\Re(s_m)>1\}.
\end{eqnarray}},
after a theorem by Abel, one obtains the {\it polyzeta} as follows
\begin{eqnarray*}
\Lim_{z \rightarrow 1}\Li_{s_1,\ldots,s_r}(z)=\Lim_{N \rightarrow \infty}\H_{s_1,\ldots,s_r}(N)=\zeta(s_1,\ldots,s_r).
\end{eqnarray*}
This theorem is no more valid in the divergent 
cases (for $(s_1,\ldots,s_r)\in\N^r$)
and require the renormalization of the corresponding divergent polyzetas.
It is already done for the corresponding case of polyzetas at positive 
multi-indices \cite{Daresbury,JSC,cade}
and it is done \cite{FKMT,GZ, MP} and completed in \cite{GHMposter,QED} for the case of  polyzetas at positive multi-indices. 

To study the polylogarithms at negative multi-indices, one relies on \cite{GHMposter,QED} 
\begin{enumerate}
\item the (one-to-one) correspondence between the multi-indices $(s_1,\ldots,s_r)\in\N^r$
and the words $y_{s_1}\ldots y_{s_r}$ defined over  $Y_0$,
\item indexing these polylogarithms by words $y_{s_1}\ldots y_{s_r}$~:
\begin{eqnarray*}
\Li^-_{y_{s_1}\ldots y_{s_r}}(z)=\Li^-_{s_1,\ldots,s_r}(z)=\sum_{n_1>\ldots>n_r>0}{n_1^{s_1}\ldots n_r^{s_r}}\;{z^{n_1}}.
\end{eqnarray*}
\end{enumerate}

In the same way, for polylogarithms at positive indices, one relies on \cite{SLC43,FPSAC97}
\begin{enumerate}
\item the (one-to-one) correspondence between the combinatorial compositions $(s_1,\ldots,s_r)$
and the words $x_0^{s_1-1}x_1\ldots x_0^{s_r-1}x_1$ in $X^*x_1+ 1_{X^*}$
\item the indexing of these polylogarithms by words $x_0^{s_1-1}x_1\ldots x_0^{s_r-1}x_1$~:
\begin{eqnarray*}
\Li_{x_0^{s_1-1}x_1\ldots x_0^{s_r-1}x_1}(z)=\Li_{s_1,\ldots,s_r}(z)=\sum_{n_1>\ldots>n_r>0}\frac{z^{n_1}}{n_1^{s_1}\ldots n_r^{s_r}}.
\end{eqnarray*}
\end{enumerate}
Moreover, one obtained the polylogarithms at positive indices as image by the following isomorphism of the shuffle algebra\footnote{As follows defined on a superset of the of Lyndon words, as pure  
transcendence basis, and extended by algebraic specialization  
\cite{TCS,IMACS}.} \cite{SLC43}
\begin{eqnarray*}
\Li_{\bullet}:(\CX,\shuffle,1_{X^*})&\longrightarrow&(\C\{\Li_w\}_{w\in X^*},\times,1_{\Omega}),\\
x_0^n&\longmapsto&\frac{\log^n(z)}{n!},\\
x_1^n&\longmapsto&\frac{\log^n(1/(1-z))}{n!},\\
x_0^{s_1-1}x_1\ldots x_0^{s_r-1}x_1&\longmapsto&\sum_{n_1>\ldots>n_r>0}\frac{z^{n_1}}{n_1^{s_1}\ldots n_r^{s_r}}.
\end{eqnarray*}
Extending over the set of rational power series\footnote{${\mathbb C}^{\mathrm{rat}}\langle\langle X\rangle\rangle$ is the closure
by rational operations $\{+,{\tt conc},{}^*\}$ of $\CX$, where, for $S\in\CXX$ such that $\scal{S}{1_{X^*}}=0$, one has \cite{berstel}
\begin{eqnarray*}
S^*=\sum_{k\ge0}S^k.
\end{eqnarray*}}
%=\sum_{k\ge0}S^{\shuffle k}/k!=\exp_{\shuffle}(S)
on non commutative variables, ${\mathbb C}^{\mathrm{rat}}\langle\langle X\rangle\rangle$, as follows
\begin{eqnarray*}
S&=&\sum_{n\ge0}\scal{S}{x_0^n}x_0^n+\sum_{k\ge1}\sum_{w\in(x_0^*x_1)^kx_0^*}\scal{S}{w}\;w,\\
\Li_{S}(z)&=&\sum_{n\ge0}\scal{S}{x_0^n}\frac{\log^n(z)}{n!}+\sum_{k\ge1}\sum_{w\in(x_0^*x_1)^kx_0^*}\scal{S}{w}\Li_w,
\end{eqnarray*}
the morphism $\Li_{\bullet}$ is no longer injective over ${\mathbb C}^{\mathrm{rat}}\langle\langle X\rangle\rangle$
but $\{\Li_w\}_{w\in X^*}$ are still linearly independant over $\calC$ \cite{cade,words03}. 
\begin{example}\label{example1}
\begin{enumerate}
\item[i.] $1_{\Omega} =\Li_{1_{X^*}}=\Li_{x_1^*-x_0^*\minishuffle x_1^*}$.
\item[ii.] $\lambda =\Li_{(x_0+x_1)^*}=\Li_{x_0^*\minishuffle x_1^*}=\Li_{x_1^*-1}$.
\item[iii.] $\calC =\C [\Li_{x_0^*},\Li_{(-x_0)^*},\Li_{x_1^*}]$.
\item[iv.] $\calC\{\Li_w\}_{w\in X^*} =\{\Li_S\vert S\in{\mathbb C}[x_0^*]\shuffle{\mathbb C}[(-x_0)^*]\shuffle{\mathbb C}[x_1^*]\shuffle\CX\}.$
\end{enumerate}
\end{example}

Let us consider also the differential and integration operators, acting on $\calC\{\Li_w\}_{w\in X^*}$ \cite{acta}~:
\begin{eqnarray*}
&&\partial_z=\Frac{d}{dz},\theta_0=z\Frac{d}{dz},\theta_1=(1-z)\Frac{d}{dz},\\
\forall f\in\calC,&&\iota_0(f)=\int_{z_0}^zf(s)\omega_0(s)\quad\mbox{and}\quad\iota_1(f)=\int_0^zf(s)\omega_1(s).
\end{eqnarray*}
Here, the operator $\iota_0$ is well-defined (as in definition \ref{def1} in section \ref{continuity}) then one can check easily \cite{orlando,words03,GHMposter,QED}

\begin{enumerate}
\item The subspace $\calC\{\Li_w\}_{w\in X^*}$ is closed under the action of $\{\theta_0,\theta_1\}$ and $\{\iota_0,\iota_1\}$.

\item The operators $\{\theta_0,\theta_1,\iota_0,\iota_1\}$ satisfy in particular,
\begin{eqnarray*}
\theta_1+\theta_0=\bigl[\theta_1,\theta_0\bigr]=\partial_z&\mbox{and}&\forall k=0,1,\theta_k\iota_k=\mathrm{Id},\\
\left[\theta_0\iota_1,\theta_1\iota_0\right]=0&\mbox{and}&(\theta_0\iota_1)(\theta_1\iota_0)=(\theta_1\iota_0)(\theta_0\iota_1)=\mathrm{Id}.
\end{eqnarray*}

\item $\theta_0\iota_1$ and $\theta_1\iota_0$ are scalar operators within $\calC\{\Li_w\}_{w\in X^*}$,
respectively with eigenvalues $\lambda$ and $1/\lambda$, {\it i.e.}
\begin{eqnarray*}
(\theta_0\iota_1)f=\lambda f&\mbox{and}&(\theta_1\iota_0)f=(1/\lambda)f.
\end{eqnarray*}

\item Let $w=y_{s_1}\ldots y_{s_r}\in Y^*$ (then $\pi_X(w)=x_0^{s_1-1}x_1\ldots x_0^{s_r-1}x_1$)
and $u=y_{t_1}\ldots y_{t_r}\in Y_0^*$. The functions $\Li_w,\Li^-_u$ satisfy
\begin{eqnarray*}
\Li_w=({\iota_0^{s_1-1}\iota_1\ldots\iota_0^{s_r-1}\iota_1})1_{\Omega},
&&
\Li^-_u=({\theta_0^{t_1+1}\iota_1\ldots\theta_0^{t_r+1}\iota_1})1_{\Omega},\\
\iota_0\Li_{\pi_X(w)}=\Li_{x_0\pi_X(w)},&&\iota_1\Li_{w}=\Li_{x_1\pi_X(w)},\\
\theta_0\Li_{x_0\pi_X(w)}=\Li_{\pi_X(w)},&&\theta_1\Li_{x_1\pi_X(w)}=\Li_{\pi_X(w)},\\
\theta_0\Li_{x_1\pi_X(w)}={\lambda}\Li_{\pi_X(w)},&&\theta_0\Li_{x_1\pi_X(w)}=\Li_{\pi_X(w)}/{\lambda}.
\end{eqnarray*}
\end{enumerate}

Here, we explain the whole project of extension of $\Li_\bullet$, study different aspects of it, in particular what is desired of $\iota_i,i=0,1$.
The interesting problem in here is that what we do expect of $\iota_i,i=0,1$. In fact, the answers are
\begin{itemize}
\item it is a section of $\theta_i,i=0,1$ ({\it i.e.} takes primitives for the corresponding differential operators).
\item it extends $\iota_i,i=0,1$ (defined on $\C\{\Li_w\}_{w\in X^*}$, and very surprisingly, although not coming directly from Chen calculus, they provide a group-like generating series)
\end{itemize}
We will use this construction to extend $\Li_\bullet$ to $\calC\{\Li_w\}_{w\in X^*}$ 
and, after that, we extend it to a much larger rational algebra.

\section{Background}
%\tcr{D\'ebut de la r\'e\'ecriture des paragraphes 2 jusqu'\`a 3 (ancien 4) compris.}
\subsection{Standard topology on $\calH(\Omega)$}\label{standard}
The algebra $\calH(\Omega)$ is that of analytic functions defined over $\Omega$. It is endowed with the topology of {\it compact convergence} whose seminorms are indexed by compact subsets of $\Omega$, and defined by
\begin{eqnarray*}
p_K(f)=||f||_K=\sup_{s\in K} |f(s)|.
\end{eqnarray*}
Of course, 
\begin{eqnarray*}
p_{K_1\cup K_2} = \sup (p_{K_1},p_{K_2}),
\end{eqnarray*}
and therefore the same topology is defined by extracting a {\it fondamental subset of seminorms}, here it can be choosen denumerable. As $\calH(\Omega)$ is complete with this topology it is a Frechet space and even, as 
\begin{eqnarray*}
p_K(fg)\leq p_K(f)p_K(g),
\end{eqnarray*}
 it is a Frechet algebra (even more, as $p_K(1_\Omega)=1$ a Frechet algebra with unit).

With the standard topology above, an operator $\phi\in \mathrm{End}(\calH(\Omega))$ is continuous iff 
(with $K_i$ compacts of $\Omega$)
\begin{eqnarray*}
(\forall K_2)(\exists K_1)(\exists M_{12}>0)(\forall f\in \calH(\Omega))
(||\phi(f)||_{K_2}\leq M_{12}||f||_{K_1}).
\end{eqnarray*}
\subsection{Study of continuity of the sections $\theta_i$ and $\iota_i$}\label{continuity}
Then, $\calC\{\Li_w\}_{w\in X^*}$ (and $\calH(\Omega)$) being closed under the operators $\theta_i;\ i=0,1$. We will first build sections of them, then see that they are continuous and, propose (discontinuous) sections more adapted to renormalisation and the computation of associators.      

For $z_0\in \Omega$, let us define $\iota_i^{z_0}\in \mathrm{End}(\calH(\Omega))$ by 
\begin{eqnarray*}
\iota_0^{z_0}(f)=\int_{z_0}^z f(s)\omega_0(s),&&\iota_1^{z_0}(f)=\int_{z_0}^z f(s)\omega_1(s).
\end{eqnarray*}
It is easy to check that $\theta_i\iota_i^{z_0}=Id_{\calH(\Omega)}$ and that they are continuous on $\calH(\Omega)$ for the topology of compact convergence because for all $K\subset_{compact}\Omega$, we have
\begin{eqnarray*}
|p_K(\iota_i^{z_0}(f)| \leq  p_K(f)\Big[\sup_{z\in K}|\int_{z_0}^z\omega_i(s)|\Big],
\end{eqnarray*}
and, in view or paragraph (\ref{standard}), this is sufficient to prove continuity. Hence the operators $\iota_i^{z_0}$ are also well defined on 
$\calC\{\Li_w\}_{w\in X^*}$ and it is easy to check that 
\begin{eqnarray*}
\iota_i^{z_0}(\calC\{\Li_w\}_{w\in X^*})\subset \calC\{\Li_w\}_{w\in X^*}.
\end{eqnarray*}
Due to the decomposition of $\calH(\Omega)$ into a direct sum of closed subspaces
\begin{eqnarray*}
\calH(\Omega)=\calH_{z_0\mapsto 0}(\Omega)\oplus \C 1_{\Omega},
\end{eqnarray*}
it is not hard to see that the graphs of $\theta_i$ are closed, thus, the  $\theta_i$ are also continuous. 

Much more interesting (and adapted to the explicit computation of associators) we have the operator $\iota_i$ (without superscripts), mentioned in the introduction and (rigorously) defined by means of a $\C$-basis of 
\begin{eqnarray*}
\calC\{\Li_w\}_{w\in X^*}=\calC\otimes_\C \C\{\Li_w\}_{w\in X^*}.
\end{eqnarray*}
As $\C\{\Li_w\}_{w\in X^*}\cong \C[\Lyn(X)]$, one can partition the alphabet of this  polynomial algebra in $(\Lyn(X)\cap X^*x_1)\sqcup \{x_0\}$ and then get the decomposition 
\begin{equation*}
\calC\{\Li_w\}_{w\in X^*}=
\calC\otimes_\C \C\{\Li_w\}_{w\in X^*x_1}\otimes_\C \C\{\Li_w\}_{w\in x_0^*}.
\end{equation*}
In fact, we have an algorithm to convert $\Li_{ux_1x_0^n}$ as
\begin{eqnarray*}
\Li_{ux_1x_0^n}(z)=\Sum_{m\leq n}P_m\log^m(z) =\Sum_{m\leq n\atop w\in X^*x_1}\scal{P_m}{w}\Li_w(z)\log^m(z).
\end{eqnarray*}
due to the identity \cite{IMACS}
\begin{eqnarray*}
ux_1x_0^n=ux_1\shuffle x_0^n-\Sum_{k=1}^n(u\shuffle x_0^k)x_1x_0^{n-k}.
\end{eqnarray*}
 This means that 
\begin{eqnarray*}
\calB&=& \Big(z^k \Li_{ux_1}(z)\Li_{x_0^n}(z)\Big)_{(k,n,u)\in \Z\times \N\times X^*}\cr
&\sqcup&\Big(\frac{1}{(1-z)^l}\Li_{ux_1}(z)\Li_{x_0^n}(z)\Big)_{(l,n,u)\in \N^2_+\times X^*}\cr
&\sqcup& \Big(z^k \Li_{x_0^n}(z)\Big)_{(k,n)\in \Z\times \N_+} \cr
&\sqcup& \Big(\frac{1}{(1-z)^l}\Li_{x_0^n}(z)\Big)_{(k,l,n)\in \N^2_+}
\end{eqnarray*}
is a $\C$-basis of $\calC\{\Li_w\}_{w\in X^*}$.\\
With this basis, we can define the operator $\iota_0$ as follows 
\begin{definition}\label{def1}
Define the index map $\ind : \calB\rightarrow \Z$ by
\begin{eqnarray*}
\ind(\frac{z^k}{(1-z)^l}\Li_{x_0^n}(z))&=&k,\\
\ind (\frac{z^k}{(1-z)^l}\Li_{ux_1}(z)\log^n(z))&=&k+|ux_1|.
\end{eqnarray*}
Now $\iota_0$ is computed by:
\begin{enumerate}
\item $\iota_0(b)=\int_{0}^z b(s)\omega_0(s)$ if $\ind(b)\geq 1$.
\item $\iota_0(b)=\int_{1}^z b(s)\omega_0(s)$ if $\ind(b)\leq 0$.
\end{enumerate}
\end{definition}

To show discontinuity of $\iota_0$ with a direct example, the technique consists in exhibiting 2 sequences $f_n,g_n\in \C\{\Li_w\}_{w\in X^*}$ converging to the same limit but such that 
\begin{eqnarray*}
\lim\iota_0(f_n)\not=\lim\iota_0(g_n).
\end{eqnarray*}
Here we choose the function $z$ which can be approached by two limits. For continuity, we should have ``equality of the limits of the image-sequences'' which is not the case.    
\begin{eqnarray*}
z&=& \sum_{n\geq 0}\frac{\log^n(z)}{n!},\\
z&=&\sum_{n\geq 1}\frac{(-1)^{n+1}}{n!}\log^n(\frac{1}{1-z}).
\end{eqnarray*}
Let then 
\begin{eqnarray*}
f_n=\sum_{0\leq m\leq n}\frac{\log^m(z)}{m!}&\mbox{and}&
g_n=\sum_{1\leq m\leq n}\frac{(-1)^{m+1}}{m!}\log^m(\frac{1}{1-z}),
\end{eqnarray*}
we have $f_n,g_n\in \C\{\Li_w\}_{w\in X^*}$ and $\iota_0(f_n)=f_{n+1}-1$.
Hence, one has $\lim(\iota_0(f_n))=z-1$. On the other hand
\begin{eqnarray*}
\lim\iota_0(g_n)=\lim\int_0^zg_n(s)\omega_0(s)=\int_0^z\lim g_n(s)\omega_0(s)=\int_0^zs\omega_0(s)=z.
\end{eqnarray*}
The exchange of the integral with the limit above comes from the fact that the operator 
\begin{eqnarray*}
\phi\mapsto \int_{0}^z\phi(s)\omega_0(s), 
\end{eqnarray*}
is continuous on the space $\calH_0(\Omega\cup B(0,1))$
of analytic functions $f\in \calH(\Omega\cup B(0,1))$ such that $f(0)=0$ 
($B(0,1)$ is the open ball of center $0$ and radius $1$). 
\section{Algebraic extension of $\mathbf{\Li_{\bullet}}$ to \\
$\mathbf{(\ncs{\C^\mathrm{rat}}{X},\shuffle,1_{X^*})[x_0^*,(-x_0)^*,x_1^*]}$}\label{shuff_ext}

We will use several times the following lemma which is characteristic-free.  

\begin{lemma}\label{diff_lemma}
Let $(\calA,d)$ be a commutative differential ring without zero divisor, and $R=\ker(d)$ be its subring of constants.
Let $z\in \calA$ such that $d(z)=1$ and $S=\{e_\alpha\}_{\alpha\in I}$ be a set of eigenfunctions of $d$ all different ($I\subset R$)
{\it i.e}. 
\begin{itemize}
\item[i.] $e_\alpha\not=0$.
\item[ii.] $d(e_\alpha)=\alpha e_\alpha\ ;\ \alpha\in I$.
\end{itemize} 
Then the family $(e_\alpha)_{\alpha\in I}$ is linearly free over $R[z]$\footnote{Here $R[z]$ 
is understood as ring adjunction i.e. the smallest subring generated by $R\cup \{z\}$.}.    
\end{lemma}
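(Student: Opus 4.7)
The plan is to argue by contradiction via a double minimality. Suppose there exists a nontrivial relation $\sum_{i=1}^{n} P_i\, e_{\alpha_i} = 0$ with $P_i \in R[z] \setminus \{0\}$ and pairwise distinct $\alpha_i \in I$; among all such, choose one minimizing $n$, and subject to this minimizing $\min_j \deg_z P_j$. After reindexing we may assume the minimum is attained at $j = n$. To make the ``$z$-degree'' meaningful I would first check that $z$ is transcendental over $R$: any nontrivial relation $\sum c_k z^k = 0$ with $c_k \in R$ of minimal positive degree becomes strictly shorter after applying $d$, which is impossible.

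The key auxiliary claim is: \emph{if $Q \in R[z]\setminus\{0\}$ satisfies $d(Q) = \beta Q$ with $\beta \in R$, then $\beta = 0$.} Writing $Q = c_m z^m + \text{(lower terms)}$ with $c_m \neq 0$, the leading coefficient of $d(Q)$ lives in degree $m - 1$ while that of $\beta Q$ is $\beta c_m$ in degree $m$, so the integral-domain hypothesis on $\calA$ forces $\beta = 0$. Now apply the twisted derivation $(d - \alpha_n\,\mathrm{id})$ to the chosen minimal relation; using $(d - \alpha_n\,\mathrm{id})(P\,e_\alpha) = (d(P) + (\alpha - \alpha_n)P)\,e_\alpha$ one obtains
\[
d(P_n)\,e_{\alpha_n} + \sum_{i=1}^{n-1}\bigl(d(P_i) + (\alpha_i - \alpha_n)P_i\bigr)e_{\alpha_i} = 0.
\]
If $d(P_n) \neq 0$, this new relation is nontrivial; either fewer than $n$ coefficients survive (contradicting the minimality of $n$), or all $n$ survive and its $n$-th coefficient $d(P_n)$ has strictly smaller $z$-degree than $P_n$ (contradicting the minimality of $\min_j \deg_z P_j$). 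Hence $d(P_n) = 0$, i.e.\ $P_n \in R$. The residual relation $\sum_{i<n} Q_i e_{\alpha_i} = 0$ then has at most $n - 1$ terms and, by minimality of $n$, must be trivial, so $d(P_i) = (\alpha_n - \alpha_i)P_i$ for $i < n$ with $\alpha_n - \alpha_i \in R\setminus\{0\}$. The auxiliary claim then forces $P_i = 0$, contradicting $P_i \neq 0$.

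The principal obstacle is honoring the claim of being \emph{characteristic-free}: in characteristic $p$, $z$ can fail to be transcendent over $R$ (e.g.\ $z^p$ is constant for $d = d/dz$), so the naive $z$-degree is not available. I would address this by replacing polynomial degree everywhere with the increasing $R$-module filtration $R \subset R + Rz \subset R + Rz + Rz^2 \subset \cdots$ of $R[z]$, observing that $d$ strictly lowers this filtration and that multiplication by any $\beta \in R \setminus \{0\}$ is faithful on each graded piece (by integrality of $\calA$); the double-minimality argument then transfers verbatim.
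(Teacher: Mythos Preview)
Your argument is correct and follows the paper's route exactly: choose a relation minimal in length and then in degree, apply $d-\alpha_n\,\mathrm{id}$, and use that $d$ strictly lowers degree while multiplication by a nonzero element of $R$ does not; the paper sidesteps your two-stage treatment of the characteristic by working from the outset with \emph{formal} polynomials $P_j\in R[t]_{\mathrm{pol}}$, so that ``degree'' is unambiguous regardless of whether $z$ is transcendental over $R$ (your filtration is the intrinsic form of the same device). One cosmetic omission: your endgame via the auxiliary claim only produces a contradiction when $n\geq 2$, so the case $n=1$ should be recorded separately (immediate from $\calA$ having no zero divisors and $e_{\alpha_1}\neq 0$).
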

\begin{proof}
If there is no non-trivial $R[z]$-linear relation, we are done. Otherwise let us consider  relations 
\begin{equation}\label{lin_rel0}
\sum_{j=1}^{N}P_j(z)e_{\alpha_j}=0,
\end{equation}
with $P_j\in R[t]_{pol}\setminus \{0\}$\footnote{Here 
$R[t]_{pol}$ means the formal univariate polynomial ring (the subscript is here to avoid confusion).} for all $j$ (packed linear relations). We choose one minimal w.r.t. the triplet 
\begin{equation}\label{triplet}
[N,\deg(P_N),\sum_{j<N}\deg(P_j)],
\end{equation}
lexicographically ordered from left to right\footnote{i.e. consider the ones with $N$ minimal and among these,
we choose one with $\deg(P_N)$ minimal and among these we choose one with $\sum_{j<N}\deg(P_j)$ minimal.}.

Remarking that $d(P(z))=P'(z)$ (because $d(z)=1$), we apply the operator $d-\alpha_N Id$ to both sides of (\ref{lin_rel0}) and get
\begin{equation}\label{lin_rel01}
\sum_{j=1}^{N}\Big(P'_j(z)+(\alpha_j-\alpha_N)P_j(z)\Big)e_{\alpha_j}=0.
\end{equation}
Minimality of (\ref{lin_rel0}) implies that (\ref{lin_rel01}) is trivial i.e. 
\begin{equation}\label{coeffs_rel01}
P'_N(z)=0\ ;\ (\forall j=1..N-1)(P'_j(z)+(\alpha_j-\alpha_N)P_j(z)=0).
\end{equation} 
Now relation (\ref{lin_rel0}) implies 
\begin{equation}\label{lin_rel4}
\prod_{1\leq j\leq n-1}(\alpha_N-\alpha_j)\Big(\sum_{j=1}^{N}P_j(z)e_{\alpha_j}\Big)=0,
\end{equation}
which, because $\calA$ has no zero divisor, is packed and has the same associated triplet 
(\ref{triplet}) as (\ref{lin_rel0}). From (\ref{coeffs_rel01}), we see that for all $k<N$ 
\begin{eqnarray*}
\prod_{1\leq j\leq n-1}(\alpha_N-\alpha_j)P_k(z)=
\prod_{1\leq j\leq n-1\atop j\not=k}(\alpha_N-\alpha_j)P'_k(z),
\end{eqnarray*}
so, if $N\geq 2$, we would get a relation of lower triplet (\ref{triplet}). This being impossible, we get $N=1$ and (\ref{lin_rel0}) boils down to $P_N(z)e_N=0$
which, as $\calA$ has no zero divisor, implies $P_N(z)=0$, contradiction.

Then the $(e_\alpha)_{\alpha\in I}$ are $R[z]$-linearly independent.     
\end{proof}

\begin{remark}\label{char_zero}
If $\calA$ is a $\Q$-algebra or only of characteristic zero ({\it i.e.}, $n1_\calA=0\Rightarrow n=0$), 
then $d(z)=1$ implies that $z$ is transcendent over $R$.
\end{remark}
First of all, let us prove 
\begin{lemma}\label{int-dom} Let $k$ be a field of characteristic zero and $Z$ an alphabet. 
Then $(\ncs{k}{Z},\shuffle,1_{Z^*})$ is a $k$-algebra without zero divisor.
\end{lemma}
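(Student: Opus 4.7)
The plan is to reduce, in two steps, to the polynomial shuffle algebra over a finite subalphabet, where Radford's structure theorem settles the matter.

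First I would exploit the length grading on $\ncs{k}{Z}$. Any series $S$ decomposes uniquely as $S = \sum_{n\geq 0} S_n$ with $S_n$ supported on words of length $n$, and since the shuffle of a length-$m$ word with a length-$n$ word is a combination of length-$(m+n)$ words, the shuffle is graded:
\begin{eqnarray*}
(S \shuffle T)_N = \sum_{m+n=N} S_m \shuffle T_n,
\end{eqnarray*}
a \emph{finite} sum. For nonzero $S,T$, let $m,n$ be the smallest indices with $S_m \ne 0$, $T_n \ne 0$; then for any $i+j = m+n$ either $i<m$ (so $S_i=0$) or $j<n$ (so $T_j=0$), which forces $(S \shuffle T)_{m+n} = S_m \shuffle T_n$. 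Hence it suffices to show $S_m \shuffle T_n \ne 0$.

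Next I would pass to a finite subalphabet. Pick $u^* \in \supp(S_m)$ and $v^* \in \supp(T_n)$, and let $Z' \subset Z$ be the finite set of letters appearing in $u^* v^*$. Let $S', T'$ be the restrictions of $S_m, T_n$ to words over $Z'$: these are nonzero (they contain $u^*$, resp.\ $v^*$), and are genuine polynomials in $\ncp{k}{Z'}$ since $(Z')^m$ and $(Z')^n$ are finite. Because shuffles of words in $(Z')^*$ remain in $(Z')^*$, one has $(S_m \shuffle T_n)|_{(Z')^*} = S' \shuffle T'$, so it is enough to show $S' \shuffle T' \ne 0$ in $\ncp{k}{Z'}$.

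Finally I would invoke Radford's theorem: since $k$ is of characteristic zero, $(\ncp{k}{Z'},\shuffle,1_{(Z')^*})$ is the free commutative polynomial $k$-algebra generated by the Lyndon words $\Lyn(Z')$, hence an integral domain. Therefore $S' \shuffle T' \ne 0$, and unwinding the two reductions yields $S \shuffle T \ne 0$. The one genuinely sharp hypothesis is characteristic zero: in characteristic $p$ one already has $x^{\shuffle p} = p!\, x^p = 0$ for a single letter $x$, so $(\ncp{k}{Z'},\shuffle)$ acquires nilpotents and Radford's freeness breaks. Everything else is routine bookkeeping.
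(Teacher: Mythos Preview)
Your proof is correct, but it takes a different route from the paper's.

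The paper argues globally: it fixes an ordered basis $B=(b_i)_{i\in I}$ of $\Lie_k\langle Z\rangle$, uses the divided-power PBW basis $(B^\alpha/\alpha!)_{\alpha\in\N^{(I)}}$ of $\ncp{k}{Z}$, and observes that its $\Delta_{\shuffle}$-coproduct is exactly that of monomials in a commutative polynomial ring. Dualizing, $(\ncs{k}{Z},\shuffle)$ is identified with the formal power series ring $k[[I]]$, which is an integral domain. No reduction to polynomials or to finite alphabets is needed.

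Your argument instead reduces in two steps --- to lowest homogeneous components, then to a finite subalphabet --- and invokes Radford's theorem for the \emph{polynomial} shuffle algebra $(\ncp{k}{Z'},\shuffle)\cong k[\Lyn(Z')]$. The key step, that restriction to $(Z')^*$ commutes with shuffle because any word in $\supp(u\shuffle v)$ uses exactly the letters of $uv$, is sound. The two proofs are cousins (Lyndon words are one particular Hall basis, so Radford is a specialization of the PBW picture), but the paper's version gives more: an explicit isomorphism with a power series ring, valid for arbitrary $Z$ at once. Your version is more elementary in that it never touches the Lie algebra or coproduct machinery directly, at the cost of the two reduction steps.
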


\begin{proof}
Let $B=(b_i)_{i\in I}$ be an ordered basis of $\Lie_k\langle Z\rangle$ and 
$(\frac{B^{\alpha}}{\alpha !})_{\alpha\in \N^{(I)}}$ the divided corresponding PBW basis. One has 
\begin{eqnarray*}
\Delta_{\shuffle}(\frac{B^{\alpha}}{\alpha !})=
\sum_{\alpha=\alpha_1+\alpha_2}
\frac{B^{\alpha_1}}{\alpha_1 !}\otimes\frac{B^{\alpha_2}}{\alpha_2 !}\ .
\end{eqnarray*}   
Hence, if $S,T\in (\ncs{k}{Z},\shuffle,1_{Z^*})$, considering
\begin{eqnarray*}
\scal{S\shuffle T}{\frac{B^{\alpha}}{\alpha !}}=\scal{S\otimes T}{\Delta_{\shuffle}(\frac{B^{\alpha}}{\alpha !})}=\sum_{\alpha=\alpha_1+\alpha_2}
\scal{S}{\frac{B^{\alpha_1}}{\alpha_1 !}}\scal{T}{\frac{B^{\alpha_2}}{\alpha_2 !}}\ ,
\end{eqnarray*}
we see that $(\ncs{k}{Z},\shuffle,1_{Z^*})\simeq (k[[I]],.,1)$ (commutative algebra of formal series) which has no zero divisor). 
\end{proof}

\begin{lemma}\label{alg_ind}
Let $\mathcal{A}$ be a $\mathbb{Q}$-algebra (associative, unital, commutative) and $z$ an indeterminate, then $e^z\in \mathcal{A}[[z]]$ is transcendent over $\mathcal{A}[z]$. 
\end{lemma}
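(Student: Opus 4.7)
The plan is to adapt the proof of Lemma~\ref{diff_lemma} to the differential algebra $(\mathcal{A}[[z]], d/dz)$, whose ring of constants is $\mathcal{A}$ itself and in which $d(z)=1$. Since $\mathcal{A}$ is a $\mathbb{Q}$-algebra, the integers embed injectively in $\mathcal{A}$ and each $e^{nz}\in \mathcal{A}[[z]]$ is a nonzero eigenvector of $d/dz$ with eigenvalue $n$. Transcendence of $e^z$ over $\mathcal{A}[z]$ is equivalent to $\mathcal{A}[z]$-linear independence of the family $(e^{nz})_{n\geq 0}$, so I would aim to prove the latter.

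Assuming, for contradiction, a nontrivial packed relation
$$\sum_{j=0}^{N} P_j(z)\, e^{jz} = 0, \qquad P_N \neq 0,$$
chosen with $N$ minimal and then $\deg(P_N)$ minimal (exactly as in the proof of Lemma~\ref{diff_lemma}), I would apply $(d/dz - N\cdot\mathrm{Id})$ to obtain
$$P'_N(z)\, e^{Nz} + \sum_{j=0}^{N-1} \bigl[P'_j(z) + (j - N)\, P_j(z)\bigr]\, e^{jz} = 0.$$
If $P'_N \neq 0$, this would yield a packed relation of length $N$ with smaller $\deg(P_N)$, contradicting minimality. Otherwise $P_N = c$ is a constant in $\mathcal{A}$, and the derived relation has packed length strictly less than $N$, so by minimality of $N$ it must be trivial, forcing $P'_j + (j - N)P_j = 0$ for every $j<N$.

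For each $j<N$, the integer $N-j$ is invertible in the $\mathbb{Q}$-algebra $\mathcal{A}$; inspecting the leading coefficient of $P_j$ in the equation $P'_j = (N-j)P_j$ then forces $P_j = 0$. The original relation consequently collapses to $c\cdot e^{Nz} = 0$, and comparison of $z^0$-coefficients (with $1$ being the constant term of $e^{Nz}$) gives $c=0$, contradicting $P_N\neq 0$. The main (minor) obstacle is that $\mathcal{A}[[z]]$ need not be an integral domain when $\mathcal{A}$ has zero divisors, so Lemma~\ref{diff_lemma} does not apply off the shelf; this will be bypassed by the observation that $e^{Nz}$ is a unit of $\mathcal{A}[[z]]$ (its constant term being $1$), which substitutes for the no-zero-divisor hypothesis at the final step. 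An alternative route would be to prove transcendence over $\mathbb{Q}[z]$ via the lemma (applied in the domain $\mathbb{Q}[[z]]$) and then extend by flatness of $\mathcal{A}$ over $\mathbb{Q}$, but the direct adaptation above is more elementary and self-contained.
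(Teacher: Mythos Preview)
Your proof is correct and follows essentially the same route as the paper, which derives the lemma as a direct consequence of Lemma~\ref{diff_lemma} and Remark~\ref{char_zero} (applied to $\mathcal{A}[[z]]$ with $d=d/dz$, constants $R=\mathcal{A}$, and eigenvectors $e^{nz}$). Your adaptation is in fact more careful than the paper's one-line appeal: you rightly observe that $\mathcal{A}[[z]]$ need not be an integral domain when $\mathcal{A}$ has zero divisors, and you repair the final step by using that $e^{Nz}$ is a unit---a point the paper passes over in silence.
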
 
\begin{proof}
It is straightforward consequence of Remark (\ref{char_zero}). Note that this can be rephrased as $[z,e^z]$ are algebraically independant over $\mathcal{A}$.
\end{proof}

\begin{proposition}
Let $Z=\{z_n\}_{n\in \N}$ be an alphabet, then $[e^{z_0},e^{z_1}]$ is algebraically independent on $\C[Z]$ within $\C[[Z]]$.   
\end{proposition}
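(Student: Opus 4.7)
The plan is to apply Lemma~\ref{alg_ind} twice, once to peel off $e^{z_0}$ and once to peel off $e^{z_1}$, using the fact that formal power series in the alphabet $Z$ can be re-indexed as iterated power series that isolate one variable at a time.

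First, I would set $\mathcal{B}_0 \eqdef \C[[\{z_n\}_{n \geq 1}]]$, which is a $\Q$-algebra, and use the natural embedding
\begin{eqnarray*}
\C[[Z]] \;\supset\; \mathcal{B}_0[[z_0]] \;\supset\; \mathcal{B}_0[z_0] \;\supset\; \C[Z].
\end{eqnarray*}
Applying Lemma~\ref{alg_ind} with $\mathcal{A} = \mathcal{B}_0$ and indeterminate $z_0$, we get that $e^{z_0}$ is transcendent over $\mathcal{B}_0[z_0]$, and \emph{a fortiori} transcendent over the subring $\C[Z]$. So any algebraic relation involving $e^{z_0},e^{z_1}$ over $\C[Z]$ is in fact a polynomial in $e^{z_1}$ over $\C[Z][e^{z_0}]$.

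Second, I would set $\mathcal{B}_1 \eqdef \C[[\{z_n\}_{n \neq 1}]]$. Since $e^{z_0}$ does not involve the variable $z_1$, we have $e^{z_0} \in \mathcal{B}_1$, and therefore $\C[Z][e^{z_0}] \subset \mathcal{B}_1[z_1]$. Applying Lemma~\ref{alg_ind} again, this time with $\mathcal{A} = \mathcal{B}_1$ (still a $\Q$-algebra, now already containing $e^{z_0}$) and indeterminate $z_1$, we conclude that $e^{z_1}$ is transcendent over $\mathcal{B}_1[z_1]$, hence transcendent over $\C[Z][e^{z_0}]$. Combining the two steps yields algebraic independence of $[e^{z_0}, e^{z_1}]$ over $\C[Z]$.

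The only genuinely delicate point — and the step I would write out with care — is the identification $\C[[Z]] \cong \mathcal{B}_0[[z_0]]$ (and similarly for the $z_1$-decomposition), which is just the standard re-grouping of a multivariate formal series by powers of a single chosen variable, but requires one sentence to justify because $Z$ is infinite. Once this embedding is in place, both applications of Lemma~\ref{alg_ind} are immediate because the needed hypothesis (a differential $\Q$-algebra with a degree-one element) is inherited by restriction to the subring $\C[Z][e^{z_0}]$.
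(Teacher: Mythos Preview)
Your argument is correct and rests on the same tool as the paper's proof, namely two applications of Lemma~\ref{alg_ind}; the organization differs only slightly. You isolate one variable at a time by passing to the large base rings $\mathcal{B}_0=\C[[\{z_n\}_{n\ge 1}]]$ and $\mathcal{B}_1=\C[[\{z_n\}_{n\ne 1}]]$ and apply the lemma directly to get $e^{z_0}$ transcendent over $\C[Z]$ and then $e^{z_1}$ transcendent over $\C[Z][e^{z_0}]$. The paper instead runs an induction showing that $[e^{z_0},\ldots,e^{z_k},z_0,\ldots,z_k]$ is algebraically independent over $\C$ for every $k$, deduces that the whole family $Z\sqcup\{e^{z}\}_{z\in Z}$ is algebraically independent, and then restricts to $Z\sqcup\{e^{z_0},e^{z_1}\}$. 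Your route is more economical for the stated two-exponential claim; the paper's detour yields the stronger statement that all the $e^{z_k}$ are jointly independent over $\C[Z]$, which is what is actually reused later.

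One expository nit: the sentence ``any algebraic relation involving $e^{z_0},e^{z_1}$ over $\C[Z]$ is in fact a polynomial in $e^{z_1}$ over $\C[Z][e^{z_0}]$'' does not use the transcendence of $e^{z_0}$ at all (it is just regrouping $P(T_0,T_1)$ by powers of $T_1$); the transcendence of $e^{z_0}$ is only invoked at the very end, to pass from $Q_j(e^{z_0})=0$ to $Q_j=0$. Reordering the two sentences would make the logic cleaner.
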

\begin{proof} Using lemma \ref{alg_ind}, one can prove by recurrence that
\begin{eqnarray*}
[e^{z_0},e^{z_1},\cdots ,e^{z_k}, z_0,z_1,\cdots ,z_k],
\end{eqnarray*}
are algebraically independent.  This implies that $Z\sqcup \{e^z\}_{z\in Z}$ is an algebraically independent set and, 
by restriction $Z\sqcup \{e^{z_0},e^{z_1}\}$ whence the proposition.  
\end{proof}

\begin{corollary}
\begin{itemize}
\item[i.] The family $\{x_0^*,x_1^*\}$ is algebraically independent over $(\ncp{\C}{X},\shuffle,1_{X^*})$
within $(\ncs{\C}{X}^\mathrm{rat},\shuffle,1_{X^*})$.
\item[ii.] $(\ncp{\C}{X},\shuffle,1_{X^*})[x_0^*,x_1^*,(-x_0)^*]$ is a free module over $\ncp{\C}{X}$, the family 
$\{(x_0^*)^{\shuffle k}\shuffle (x_1^*)^{\shuffle l}\}_{(k,l)\in \Z\times \N}$ is a $\ncp{\C}{X}$-basis of it.
\item[iii.] As a consequence,  $\{w\shuffle (x_0^*)^{\shuffle k}\shuffle (x_1^*)^{\shuffle l}\}_{w\in X^*\atop (k,l)\in \Z\times \N}$ is a $\C$-basis of it.
\end{itemize}
\end{corollary}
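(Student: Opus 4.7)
My plan is to reduce each of the three statements to the preceding proposition, via Radford's identification of the shuffle algebra with a commutative polynomial algebra on Lyndon words. The key preliminary observation is that, in $(\CX,\shuffle)$, one has $x_0^{\shuffle n}=n!\,x_0^n$, so
\[
x_0^* \;=\; \sum_{n\ge0} x_0^n \;=\; \sum_{n\ge 0}\frac{x_0^{\shuffle n}}{n!}\;=\;\exp_{\shuffle}(x_0),
\]
and likewise $x_1^* = \exp_{\shuffle}(x_1)$ and $(-x_0)^* = \exp_{\shuffle}(-x_0)$, all three series being meaningful in the completion $\CXX$.

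For (i), apply Radford's theorem: $(\CX,\shuffle,1_{X^*}) \cong \C[\Lyn(X)]$ as commutative unital algebras, the Lyndon words (in particular the letters $x_0,x_1$) being sent to independent indeterminates. Completing with respect to the word-length filtration yields an isomorphism $\CXX \cong \C[[\Lyn(X)]]$ under which $x_0^*,x_1^*$ correspond respectively to $e^{z_{x_0}},e^{z_{x_1}}$. The preceding proposition, applied with $Z = \Lyn(X)$ and the two distinguished letters $z_0,z_1$ taken to be $x_0,x_1$, states that $\{e^{z_{x_0}},e^{z_{x_1}}\}$ is algebraically independent over $\C[\Lyn(X)]$ inside $\C[[\Lyn(X)]]$. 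Transporting back through the isomorphism, $\{x_0^*,x_1^*\}$ is algebraically independent over $(\CX,\shuffle,1_{X^*})$ inside $\ncs{\C^{\mathrm{rat}}}{X}\subset\CXX$.

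For (ii), the commutativity of the shuffle implies that $\exp_{\shuffle}$ converts sums into shuffle-products, so
\[
x_0^*\shuffle(-x_0)^* \;=\; \exp_{\shuffle}(x_0)\shuffle\exp_{\shuffle}(-x_0)\;=\;\exp_{\shuffle}(0)\;=\;1_{X^*},
\]
i.e.\ $(-x_0)^*$ is the shuffle-inverse of $x_0^*$. Hence $(\CX,\shuffle)[x_0^*,x_1^*,(-x_0)^*]=(\CX,\shuffle)[x_0^*,(x_0^*)^{-1},x_1^*]$, which by (i) is a Laurent extension in $x_0^*$ and a polynomial extension in $x_1^*$, free as a $\CX$-module with the announced basis $\{(x_0^*)^{\shuffle k}\shuffle(x_1^*)^{\shuffle l}\}_{(k,l)\in\Z\times\N}$. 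For (iii), since $X^*$ is a $\C$-basis of the underlying vector space of $\CX$, shuffling it by the $\CX$-basis of (ii) yields the claimed $\C$-basis.

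The delicate point is the passage from polynomials to the completion: one must check that Radford's isomorphism at the polynomial level extends to one of the completed algebras, and that under this extension the rational series $x_i^*$ indeed matches the formal exponential $e^{z_{x_i}}$ (this is a routine check using the length grading, since both sides decompose into homogeneous components of the same finite dimension). Once this dictionary is secured, (i)–(iii) follow mechanically from the proposition.
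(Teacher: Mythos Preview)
Your proposal is correct and follows essentially the same route as the paper, whose entire proof is the one-line hint ``Chase denominators and use a theorem by Radford with $Z=\Lyn(X)$.'' You have unpacked exactly this: Radford's isomorphism $(\ncp{\C}{X},\shuffle)\cong \C[\Lyn(X)]$ identifies $x_i^*=\exp_{\shuffle}(x_i)$ with $e^{z_{x_i}}$, the preceding proposition gives algebraic independence of these exponentials over $\C[\Lyn(X)]$, and ``chasing denominators'' is precisely your observation that $(-x_0)^*$ is the shuffle-inverse of $x_0^*$, reducing the extension to a Laurent/polynomial one.
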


\begin{proof}
Chase denominators and use a theorem by Radford \cite{reutenauer} with $Z=\Lyn(X)$. 
\end{proof}

\begin{corollary}
There exists a unique morphism $\mu$, from\\ 
$(\ncp{\C}{X},\shuffle,1_{X^*})[x_0^*,(-x_0)^*,x_1^*]$ to $\calH(\Omega)$ defined by 
\begin{itemize}
\item[i.] $\mu(w)= \Li_w$ for any $w \in X^*$,
\item[ii.] $\mu(x_0^*)=z,$
\item[iii.] $\mu((-x_0)^*)= {1}/{z},$
\item[iv.] $\mu(x_1^*)={1}/({1-z}).$
\end{itemize}
\end{corollary}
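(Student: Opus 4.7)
The plan is to combine the preceding corollary with a simple universal-property argument, reducing existence and uniqueness to a calculation about Laurent polynomial rings.

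For uniqueness, observe that $X^* \cup \{x_0^*, (-x_0)^*, x_1^*\}$ generates $(\ncp{\C}{X},\shuffle,1_{X^*})[x_0^*,(-x_0)^*,x_1^*]$ as a $\C$-algebra under $\shuffle$, so any two morphisms prescribed on these generators must coincide. In particular, the values on $w\in X^*$ are forced to be $\Li_w$ since the existing polylogarithm map $\Li_\bullet:(\ncp{\C}{X},\shuffle,1_{X^*})\to(\calH(\Omega),\cdot,1_{\Omega})$ is already a morphism.

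For existence, I would use the preceding corollary in the following way. Item (ii) of the corollary says that $(\ncp{\C}{X},\shuffle,1_{X^*})[x_0^*,(-x_0)^*,x_1^*]$ is a free $\ncp{\C}{X}$-module on the family $\{(x_0^*)^{\shuffle k}\shuffle(x_1^*)^{\shuffle l}\}_{(k,l)\in\Z\times\N}$, with the convention $(x_0^*)^{\shuffle -1}=(-x_0)^*$ (coming from the identity $(-x_0)^*\shuffle x_0^*=1_{X^*}$, which I would check directly by the binomial computation $\sum_m(-1)^m\binom{k}{m}=\delta_{k,0}$). Combined with item (i), this shows that our algebra is canonically isomorphic, as a $\ncp{\C}{X}$-algebra, to the Laurent polynomial ring $\ncp{\C}{X}[T^{\pm 1},U]$, via $T\mapsto x_0^*$ and $U\mapsto x_1^*$.

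Now I would apply the universal property of Laurent polynomial rings to the structure morphism $\Li_\bullet:\ncp{\C}{X}\to\calH(\Omega)$. Since $\Omega=\C-((-\infty,0]\cup[1,+\infty[)$ excludes both $0$ and $1$, the elements $z$ and $1-z$ are units of $\calH(\Omega)$; in particular $z$ is a legitimate target for the invertible indeterminate $T$, with inverse $1/z$. Hence there is a unique $\ncp{\C}{X}$-algebra morphism $\ncp{\C}{X}[T^{\pm 1},U]\to\calH(\Omega)$ sending $T\mapsto z$ and $U\mapsto 1/(1-z)$. Pulling back through the isomorphism above produces the desired $\mu$, and the image of $(-x_0)^*=(x_0^*)^{\shuffle -1}$ is automatically $1/z$, so (i)--(iv) all hold.

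There is no real obstacle: the hard content (algebraic independence of $x_0^*$ and $x_1^*$ modulo $\ncp{\C}{X}$) is already supplied by the previous corollary. The only subtleties to watch are that $\calH(\Omega)$ is commutative (so $\shuffle$ can indeed map to the pointwise product), that $z$ avoids $0$ on $\Omega$ (needed for invertibility), and that the relation $(-x_0)^*\shuffle x_0^*=1_{X^*}$ is respected by the target assignment $1/z\cdot z=1_{\Omega}$.
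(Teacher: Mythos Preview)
Your argument is correct and is precisely the expansion of what the paper leaves implicit: the paper states this result as a corollary with no proof, relying entirely on the preceding corollary's description of $(\ncp{\C}{X},\shuffle,1_{X^*})[x_0^*,(-x_0)^*,x_1^*]$ as a free $\ncp{\C}{X}$-module on $\{(x_0^*)^{\shuffle k}\shuffle(x_1^*)^{\shuffle l}\}_{(k,l)\in\Z\times\N}$. Your universal-property reformulation via $\ncp{\C}{X}[T^{\pm1},U]$ is exactly the intended mechanism, and your side checks (the identity $(-x_0)^*\shuffle x_0^*=1_{X^*}$, which also follows at once from $(ax)^*\shuffle(bx)^*=((a+b)x)^*$, and the invertibility of $z$ on $\Omega$) are the right ones.
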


\begin{definition}
We call $\Li^{(1)}_{\bullet}$ the morphism $\mu$. 
\end{definition}

Remark that the image of $(\ncp{\C}{X},\shuffle,1_{X^*})[x_0^*,(-x_0)^*,x_1^*]$ by $\Li^{(1)}_{\bullet}$ (sect. \ref{shuff_ext}) is exactly $\calC\{\Li_w\}_{w\in X^*}$. And the operator $\iota_0$ defined by means of $\Li_\bullet$ is the same as the one defined by tensor decomposition. We have a diagram as follows
\begin{eqnarray*}
\begin{tikzcd}
(\ncp{\C}{X},\shuffle,1_{X^*})\ar[hook,two heads]{r}{\Li_\bullet}\ar[hook]{d}& 
\C\{\Li_w\}_{w\in X^*} \ar[hook]{d}\\
(\ncp{\C}{X},\shuffle,1_{X^*})[x_0^*,(-x_0)^*,x_1^*]
\ar[two heads]{r}{\Li_\bullet^{(1)}}\ar[hook]{d}& 
\calC\{\Li_w\}_{w\in X^*}\ar[hook]{d} \\
\ncp{\C}{X}\shuffle \ncs{\C^{\mathrm{rat}}}{x_0} \shuffle \ncs{\C^{\mathrm{rat}}}{x_1}\ar{r}{\Li_\bullet^{(2)}} & \calH(\Omega)\\
\ncp{\C}{X}\otimes_\C \ncs{\C^{\mathrm{rat}}}{x_0}\otimes_\C \ncs{\C^{\mathrm{rat}}}{x_1}\ar{u}\ar[dashed]{ur}&
\end{tikzcd}
\end{eqnarray*}
\begin{diagram}
Arrows and spaces obtained in this project (so far).
Among horizontal arrows only $\Li_\bullet$ is into (and is an isomorphism) $\Li_\bullet^{(1)}$ and $\Li_\bullet^{(2)}$ are not into
(for example, the image of the non-zero element $x_0^*\shuffle x_1^*- x_1^*+1$ is zero, see Example \ref{example1}).
The image of $\Li_\bullet^{(2)}$ is presumably  
\begin{eqnarray*}
\mathrm{Im}(SP_\C(X))\{\Li_w\}_{w\in X^*}\simeq 
 \C\{z^\alpha (1-z)^{\beta}\}_{\alpha,\beta\in \C}\otimes_\C \C\{\Li_w\}_{w\in X^*}.
\end{eqnarray*}
\end{diagram} 

\begin{comment}
\begin{itemize}
\item[i.] It important to be clear about ``on what is the tensor product'', because this indicates the amount of scalars ``one has the right to transport'' from one side to another (i.e. some of the relations). For example $k[x]\otimes_k k[x]\simeq k[x,y]$ whereas $k[x]\otimes_{k[x]} k[x]\simeq k[x]$.\\
\item[ii.] In $(\ncp{\C}{X},\shuffle,1_{X^*})[x_0^*,(-x_0)^*,x_1^*]$, the square bracket means ``adjunction of the elements $\{x_0^*,(-x_0)^*,x_1^*\}$ to $(\ncp{\C}{X},\shuffle,1_{X^*})$ within $(\ncs{\C}{X},\shuffle,1_{X^*})$'' {\it i.e.} the subalgebra of $(\ncs{\C}{X},\shuffle,1_{X^*})$ generated by $(\ncp{\C}{X},\shuffle,1_{X^*})$ and $\{x_0^*,(-x_0)^*,x_1^*\}$.  
\end{itemize}
\end{comment}

\section{Extension to $\mathbf{\ncp{\C}{X}\shuffle\ncs{\C^\mathrm{rat}}{x_0}\shuffle \ncs{\C^\mathrm{rat}}{x_1}}$}\label{Star-of-the-plane-ext}

\subsection{Study of the shuffle algebra $SP_{\C}(X)$} 

Indeed, the set $(a_0x_0+a_1x_1)^*_{a_0,a_1\in \C}$ is a shuffle monoid as 
\begin{eqnarray*}
(a_0x_0+a_1x_1)^*\shuffle (b_0x_0+b_1x_1)^*=((a_0+b_0)x_0+(a_1+b_1)x_1)^*.
\end{eqnarray*}
As there are many relations between these elements (as a monoid it is isomorphic to $\C^2$, hence far from being free), we study here the vector space 
\begin{eqnarray*}
SP_{\C}(X)=\mathrm{span}_{\C}\{(a_0x_0+a_1x_1)^*\}_{a_0,a_1\in \C}.
\end{eqnarray*}

It is a shuffle sub-algebra of $\ncs{(\C)^{\mathrm{rat}}}{x_0}\shuffle \ncs{(\C)^\mathrm{rat}}{x_1}$ which will be called {\it star of the plane}. Note that it is also a shuffle sub-algebra of the algebra $(\ncs{\C^{exch}}{X},\shuffle,1_{X^*})$ of exchangeable series. We can give the 

\begin{definition}\label{exchangeable}
A series is said exchangeable iff whenever two words have the same multidegree (here bidegree) 
then they have the same coefficient within it. Formally for all $u,v\in X^*$
\begin{eqnarray*}
\Big((\forall x\in X)(|u|_x=|v|_x)\Big)\implies \scal{S}{u}=\scal{S}{v}.
\end{eqnarray*}
\end{definition} 

On the other hand, for any $S \in \ncs{\C}{X}$, we can write
\begin{eqnarray*}
S= \sum_{n \geq 0} P_n,
\end{eqnarray*}
 where $P_n \in \C[X]$ such that $\deg P_n = n$ for every $n \geq 0$. Then $S$ is called exchangeable iff $P_n$ are symmetric by permutations of places for every $n \in \N$. If $S$ is written as above then we can write 
\begin{eqnarray*}
P_n = \Sum_{i=0}^n \alpha_{n,i} x_0^i \shuffle x_1^{n-i}.
\end{eqnarray*}
\begin{definition}
Let $S\in\CXX$ (resp. $\CX$) and let $P\in\CX$ (resp. $\CXX$). The left and right {\it residual}
of $S$ by $P$ are respectively the formal power series $P\resg S$ and $S\resd P$ in $\CXX$ defined by
\begin{eqnarray*}
\pol{P\resg S\bv w}=\pol{S\bv wP}&\mbox{(resp.}&\pol{S\resd P\bv w}=\pol{S\bv Pw}).
\end{eqnarray*}
\end{definition}

For any $S\in\CXX$ (resp. $\CX$) and $P,Q\in\CX$ (resp. $\CXX$), we straightforwardly get
\begin{eqnarray*}
P\resg (Q\resg S)&=&PQ\resg S,\\
(S\resd P)\resd Q&=&S\resd PQ,\\
(P\resg S)\resd Q&=&P\resg(S\resd Q).
\end{eqnarray*}
In case $x,y\in X$ and $w\in X^* $, we get\footnote{For any words $u,v\in X^*$, if $u=v$ then $\delta_u^v=1$ else $0$.}
\begin{eqnarray*}
x\resg (wy)=\delta_x^yw &\mbox{and}& xw\resd y=\delta_x^yw.
\end{eqnarray*}

\begin{theorem}
Le $\delta\in\mathfrak{Der}(\CX,\shuffle,1_{X^*})$. Moreover, we suppose  that $\delta$ is locally nilpotent\footnote{
$\phi\in \mathrm{End}(V)$ is said to be locally nilpotent iff, for any $v\in V$, there exists $N\in \N$ s.t. $\phi^N(v)=0$.}.
Then the family $(t\delta)^n/{n!}$ is summable and its sum, denoted $\exp(t\delta)$, is is a one-parameter group of automorphisms of $(\CX,\shuffle,1_{X^*})$.
\end{theorem}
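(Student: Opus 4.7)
\medskip

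The plan is to proceed in three steps: first establish summability, then the group law, then the compatibility with the shuffle product.

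\textbf{Step 1 (Summability).} Local nilpotence means that for each $P\in\CX$ there exists $N=N(P)\in\N$ with $\delta^{n}(P)=0$ for all $n\geq N$. Consequently, applied to any fixed polynomial $P$ the family
\begin{eqnarray*}
\Bigl(\frac{t^n}{n!}\delta^n(P)\Bigr)_{n\geq 0}
\end{eqnarray*}
has only finitely many non-zero terms, hence is trivially summable in $\CX$. This gives a well-defined linear endomorphism $\exp(t\delta):\CX\to\CX$ (no issue of topology beyond pointwise finiteness).

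\textbf{Step 2 (One-parameter group).} Since $s\delta$ and $t\delta$ commute in $\mathrm{End}(\CX)$, applying $\exp(t\delta)\circ\exp(s\delta)$ to an arbitrary $P$ gives a doubly-indexed sum, each fibre of which (for fixed $P$) is finite by Step 1. Regrouping by the total exponent $k=n+m$, the binomial identity yields
\begin{eqnarray*}
\exp(t\delta)\exp(s\delta)(P)
=\sum_{k\geq 0}\frac{1}{k!}\Bigl(\sum_{n+m=k}\binom{k}{n}t^n s^m\Bigr)\delta^k(P)
=\exp\bigl((t+s)\delta\bigr)(P).
\end{eqnarray*}
Specialising $s=-t$ gives $\exp(t\delta)\exp(-t\delta)=\exp(0\cdot\delta)=\mathrm{Id}_{\CX}$, so each $\exp(t\delta)$ is a linear automorphism and $t\mapsto \exp(t\delta)$ is a group morphism $(\C,+)\to\mathrm{GL}(\CX)$.

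\textbf{Step 3 (Automorphism of the shuffle algebra).} From $\delta(1_{X^*}\shuffle 1_{X^*})=\delta(1_{X^*})=2\delta(1_{X^*})$ one gets $\delta(1_{X^*})=0$, hence $\exp(t\delta)(1_{X^*})=1_{X^*}$. By an immediate induction on $n$, using the Leibniz identity for the derivation $\delta$ with respect to $\shuffle$, one obtains
\begin{eqnarray*}
\delta^n(P\shuffle Q)=\sum_{k=0}^n\binom{n}{k}\delta^k(P)\shuffle \delta^{n-k}(Q).
\end{eqnarray*}
Multiplying by $t^n/n!$, summing over $n$ and reindexing (all sums being finite by Step 1) yields
\begin{eqnarray*}
\exp(t\delta)(P\shuffle Q)=\exp(t\delta)(P)\shuffle \exp(t\delta)(Q).
\end{eqnarray*}
Combined with Step 2, this proves that $t\mapsto\exp(t\delta)$ is a one-parameter group of automorphisms of $(\CX,\shuffle,1_{X^*})$.

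\textbf{Main obstacle.} There is really no deep obstruction: the whole argument is formal. The only point requiring care is that all the series rearrangements (Cauchy product in Step 2, interchange of sums in Step 3) are legitimate, which is ensured \emph{pointwise} by the local nilpotence hypothesis. Without local nilpotence one would need a genuine topology on $\CX$ (or pass to an ad-hoc completion) to make sense of the exponential; this is precisely what the hypothesis avoids.
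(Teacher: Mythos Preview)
Your proof is correct and complete. The paper, however, states this theorem without proof --- it is presented as a standard fact and the text moves directly to the next theorem. Your three-step argument (pointwise finiteness from local nilpotence, the binomial group law, and the generalized Leibniz rule for multiplicativity) is exactly the expected elementary justification, so there is nothing to compare against; you have simply supplied the omitted details.
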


\begin{theorem}
Let $L$ be a Lie series\footnote{{\it i.e.} $\Delta_{\minishuffle}(L)=L\hat\otimes1+1\hat\otimes L$ \cite{reutenauer}.}.
Let $\delta^r_L$ and $\delta^l_L$ be defined respectively by 
\begin{eqnarray*}
\delta^r_L(P):=P\rg L& \mbox{and}&\delta^l_L(P):=L\rd P.
\end{eqnarray*}
Then $\delta^r_L$ and $\delta^l_L$ are locally nilpotent derivations of $(\CX,\shuffle,1_{X^*})$.

Therefore, $\exp(t\delta^r_L)$ and $\exp(t\delta^l_L)$ are one-parameter groups of $Aut(\CX,\shuffle,1_{X^*})$
and 
\begin{eqnarray*}
\exp(t{\delta^r_L})P=P\rg\exp(tL)&\mbox{and}&\exp(t{\delta^l_L})P=\exp(tL)\rd P.
\end{eqnarray*}
\end{theorem}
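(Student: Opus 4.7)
The plan is to verify the theorem in four stages: the derivation property, local nilpotency, application of the preceding theorem, and derivation of the closed-form formula.

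\textbf{Derivation.} The hypothesis that $L$ is a Lie series is precisely $\Delta_{\shuffle}(L) = L\hat\otimes 1 + 1\hat\otimes L$. Dually, $L$ acts as a scalar derivation of the shuffle algebra: $\scal{L}{P \shuffle Q} = \scal{L}{P}\varepsilon(Q) + \varepsilon(P)\scal{L}{Q}$ for all $P, Q \in \CX$. Starting from $\scal{\delta^r_L(P \shuffle Q)}{w} = \scal{L}{w(P \shuffle Q)}$ and unfolding the residual pairing, I would combine this primitivity with the combinatorics of how concatenation by a word interacts with the shuffle coproduct to rewrite the expression as $\scal{\delta^r_L(P) \shuffle Q + P \shuffle \delta^r_L(Q)}{w}$, yielding the Leibniz rule. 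The argument for $\delta^l_L$ is symmetric.

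\textbf{Local nilpotency and exponentiation.} Primitivity implies $\scal{L}{1_{X^*}} = 0$, so $L$ has no constant term, and the residual by $L$ strictly lowers the maximal word-length of its argument: if $\deg P = d$, then $\deg \delta^r_L(P) < d$. Iterating, $(\delta^r_L)^{d+1}(P) = 0$, establishing local nilpotency, and the same for $\delta^l_L$ by symmetry. Combined with the Leibniz rule, the hypotheses of the preceding theorem are satisfied, so $\exp(t\delta^r_L)$ and $\exp(t\delta^l_L)$ are one-parameter groups in $\mathrm{Aut}(\CX, \shuffle, 1_{X^*})$.

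\textbf{Explicit formula.} To obtain $\exp(t\delta^r_L)P = P \rg \exp(tL)$, I expand $\exp(tL) = \sum_{n \ge 0}\frac{t^n}{n!}L^n$ in $\CXX$ (concatenation) and verify $(\delta^r_L)^n(P) = P \rg L^n$ by induction on $n$, using the compatibility of $\rg$ with concatenation in $\CXX$. Summing over $n$ with the factorial weights yields the stated formula; the mirror calculation produces $\exp(t\delta^l_L)P = \exp(tL) \rd P$.

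The main obstacle I anticipate is the first stage: converting the abstract primitivity of $L$ for $\Delta_{\shuffle}$ into the concrete Leibniz identity for the residual requires careful bookkeeping of how the concatenation $w(P \shuffle Q)$ decomposes through the shuffle coproduct, and identifying the precise moment where primitivity converts a sum over deconcatenations into the derivation structure.
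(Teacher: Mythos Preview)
The paper states this theorem without proof, so there is no argument to compare against. Your four-stage plan (derivation from primitivity, local nilpotency by degree-lowering, application of the preceding theorem, closed form by iterating the residual) is the standard route and is sound.

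One point to tighten: your opening formula $\scal{\delta^r_L(P \shuffle Q)}{w} = \scal{L}{w(P \shuffle Q)}$ follows the paper's residual \emph{definition} $\scal{P\triangleleft S}{w}=\scal{S}{wP}$ literally, but under that reading one gets $\delta^r_L(1_{X^*}) = L \neq 0$, contradicting the derivation property, and degree is not lowered, contradicting your nilpotency step. The operator actually intended in the theorem --- and the one your degree-lowering and iteration arguments correctly describe, and the one the paper itself uses when it writes $(P \shuffle Q)\triangleleft T = (P\triangleleft T)\shuffle Q + P\shuffle(Q\triangleleft T)$ in the later proof --- is the residual that strips $L$ off $P$, for which the pairing reads $\scal{\delta^r_L(P)}{w} = \scal{P}{wL}$. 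With that correction the Leibniz rule follows cleanly from the multiplicativity $\Delta_{\shuffle}(wL)=\Delta_{\shuffle}(w)\,\Delta_{\shuffle}(L)$ together with $\Delta_{\shuffle}(L)=L\hat\otimes 1 + 1\hat\otimes L$, and the remaining three stages go through exactly as you outlined.
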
 

It is not hard to see that the algebra $\ncp{\C}{X}\shuffle\ncs{\C^\mathrm{rat}}{x_0}\shuffle \ncs{\C^\mathrm{rat}}{x_1}$
is closed by the shuffle derivations\footnote{These operators are, in fact,
the shifts of Harmonic Analysis and therefore defined as adjoints of multiplication, {\it i.e.}
\begin{eqnarray*}
\forall S\in \ncs{\C}{x},&&\scal{\delta^l_x (S)}{w}=\scal{S}{xw}.
\end{eqnarray*}
} $\delta^l_{x_0}, \delta^l_{x_1}$.
In particular, on it, these derivations commute\footnote{Thus, in this case, the operator $\delta^l_x$ has the 
same meaning as the operator $S\rightarrow S_x$ in \cite{SSchu}, 
$x^{-1}$ in the Theory of Languages and $\circ$ in \cite{berstel,reutenauer}.} with $\delta^r_{x_0}$ and $\delta^r_{x_1}$, respectively, {\it i.e.}, for any $x \in X$ and $S \in \ncp{\C}{X}\shuffle\ncs{\C^\mathrm{rat}}{x_0}\shuffle \ncs{\C^\mathrm{rat}}{x_1}$, one has
\begin{eqnarray*}
\delta^l_x (S) = \delta^r_x(S).
\end{eqnarray*}
 Moreover, one has 
\begin{eqnarray*}
(\alpha \delta^l_{x_0} +\beta \delta^l_{x_1})[(a_0x_0+a_1x_1)^*]=
(\alpha a_0+\beta a_1)[(a_0x_0+a_1x_1)^*],
\end{eqnarray*}  
from this we get that the family $\{(a_0x_0+a_1x_1)^*\}_{a_0,a_1\in \C}$ is linearly free over $\C$ 
\begin{eqnarray*}
SP_\C(X)
=\bigoplus_{(a_0,a_1)\in \C}\C\{(a_0x_0+a_1x_1)^*\}.
\end{eqnarray*}
We can get an arrow of $\Li_\bullet^{(2)}$ type 
$(SP_\C(X),\shuffle,1_{X^*})\longrightarrow \calH(\Omega)$ 
by sending
\begin{eqnarray*}
(a_0x_0+a_1x_1)^*=(a_0x_0)^*\shuffle (a_1x_1)^* &\longmapsto&z^{a_0}(1-z)^{-a_1}.
\end{eqnarray*} 
In particular, for any $n \in \N_+$, one has  
\begin{eqnarray*}
\Li_{ \underbrace{ 0,\ldots,0}_{\quad\mbox{$n$ times}}}^-(z) =\Li^{(2)}_{(nx_0 +n x_1)^*}(z).
\end{eqnarray*}
This arrow is a morphism for the shuffle product.  
 
\subsection{Study of the algebra $\mathbf{\ncp{\C}{X}\shuffle\ncs{\C^\mathrm{rat}}{x_0}\shuffle \ncs{\C^\mathrm{rat}}{x_1}}$}

We will start by the study of the one-letter shuffle algebra, {\it i.e.}
$(\ncs{\C^\mathrm{rat}}{x},\shuffle,1_{x^*})$ and use two times Lemma \ref{diff_lemma} above.

Let us now consider $\calA=\ncs{\C^\mathrm{rat}}{x}; e_\alpha=(\alpha x)^*, \alpha\in \C$
endowed with $d= \delta^l_x$ (which is a derivation for the shuffle) and $z=x$. We are in the conditions of Lemma \ref{diff_lemma} and then $((\alpha x)^*)_{\alpha\in \C}$ is $\C[x]$-linearly free which amounts to say that
\begin{eqnarray*}
B_0 = (x^{\shuffle k}\shuffle(\alpha x)^*)_{k\in \N,\alpha\in\C},
\end{eqnarray*} 
is $\C$-linearly free in $\ncs{\C^\mathrm{rat}}{x}$. 

To see that it is a basis, it suffices to prove that $B_0$ is (linearly) generating. Consider that 
\begin{eqnarray*}
\ncs{\C^\mathrm{rat}}{x}=\{P/Q\}_{P,Q^\in\C[x],Q(0)\not=0},
\end{eqnarray*} 
then, as $\C$ is algebraically closed, we have a basis 
\begin{eqnarray*}
B_1\cup B_2=\{x^{ k}\}_{k\geq 0}\cup\{((\alpha x)^*)^{ l}\}_{\alpha\in \C^*,l\geq 1},
\end{eqnarray*}
and it suffices to see that we can generate $B_2$ by elements of $B_0$, which s a consequence of the two identities
\begin{eqnarray*}
 x\shuffle((\alpha x)^*)^{n}&=&\sum_{j=1}^{n+1} \alpha(n,j)((\alpha x)^*)^{\shuffle j}\quad\mbox{with }\alpha(n,n+1)\not=0,\\ 
 x^{k}\shuffle(\alpha x)^* &=& \frac{1}{k!} ( x^{\shuffle k}\shuffle(\alpha x)^*).
\end{eqnarray*} 

Now, we use again Lemma \ref{diff_lemma} with 
\begin{eqnarray*}
\calA=\ncs{\C^\mathrm{rat}}{x_0}\shuffle \ncs{\C^\mathrm{rat}}{x_1}\subset \ncs{\C}{x_0,x_1},
\end{eqnarray*}
hence without zero divisor (see Lemma \ref{int-dom}), endowed with $d= \delta^l_{x_1}$ then 
$(x_0^{\shuffle k}\shuffle(\alpha x_0)^*)_{k\in \N;\atop\alpha\in \C}$,
is linearly free over $R=\ncs{\C^\mathrm{rat}}{x_0}$.  It is easily seen, using a decomposition like 
\begin{eqnarray*}
S=\sum_{p\geq 0,q\geq 0} \scal{S}{x_0^{\shuffle p}\shuffle x_1^{\shuffle q}} x_0^{\shuffle p}\shuffle x_1^{\shuffle q},
\end{eqnarray*}
that $\ncs{\C^\mathrm{rat}}{x_0}=\ker(d)$ and one obtains then that the arrow 
\begin{eqnarray*}
\ncs{\C^\mathrm{rat}}{x_0}\otimes_\C \ncs{\C^\mathrm{rat}}{x_1}\rightarrow 
\ncs{\C^\mathrm{rat}}{x_0}\shuffle \ncs{\C^\mathrm{rat}}{x_1}\subset \ncs{\C^\mathrm{rat}}{x_0,x_1}
\end{eqnarray*}
is an isomorphism. Hence,
$(x_0^{\shuffle k_0}\shuffle(\alpha_0 x_0)^*\shuffle x_1^{\shuffle k_1}\shuffle(\alpha_1 x_1)^*)_{k_i\in \N;\atop \alpha_i\in \C}$
is a $\C$-basis of $\calA=\ncs{\C^\mathrm{rat}}{x_0}\shuffle \ncs{\C^\mathrm{rat}}{x_1}$.
In order to extend $\Li_\bullet$ to $\calA$ we send 
\begin{eqnarray*}
T(k_0,k_1,\alpha_0,\alpha_1)=x_0^{\shuffle k_0}\shuffle(\alpha_0 x_0)^*\shuffle x_1^{\shuffle k_1}\shuffle(\alpha_1 x_1)^*,
\end{eqnarray*}
to $\log^{k_0}(z)z^{\alpha_0}\log^{k_1}({1}/({1-z}))({1}/({1-z}))^{\alpha_1}$,
and see that the constructed arrow follows multiplication given by
\begin{eqnarray*}
T(j_0,j_1,\alpha_0,\alpha_1)T(k_0,k_1,\beta_0,\beta_1)=T(j_0+k_0,j_1+k_1,\alpha_0+\beta_0,\alpha_1+\beta_1).
\end{eqnarray*}
Using, once more, Lemma \ref{diff_lemma}, one gets
\begin{proposition}
The family $\{(\alpha_0 x_0)^*\shuffle(\alpha_1 x_1)^*\}_{\alpha_i\in \C}$ is a\\ $(\ncp{\C}{X},\shuffle,1)$-basis of 
$\ncp{\C}{X}\shuffle\ncs{\C^\mathrm{rat}}{x_0}\shuffle \ncs{\C^\mathrm{rat}}{x_1},$
then we have a $\C$-basis $\{w\shuffle(\alpha_0 x_0)^*\shuffle(\alpha_1 x_1)^*\}_{\alpha_i\in \C\atop w\in X^*}$ of 
\begin{eqnarray*}
\ncp{\C}{X}\shuffle\ncs{\C^\mathrm{rat}}{x_0}\shuffle \ncs{\C^\mathrm{rat}}{x_1}
&=&\ncp{\C}{X}[\ncs{\C^\mathrm{rat}}{x_0}\shuffle \ncs{\C^\mathrm{rat}}{x_1}]\\
&=&\ncp{\C}{X}\shuffle SP_{\C}(X).
\end{eqnarray*}
\end{proposition}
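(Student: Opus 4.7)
The plan is to verify the two characteristic basis properties of the family $\{(\alpha_0 x_0)^*\shuffle(\alpha_1 x_1)^*\}_{(\alpha_0,\alpha_1)\in\C^2}$ over $(\ncp{\C}{X},\shuffle,1_{X^*})$: generation and linear independence. The $\C$-basis statement for $\{w\shuffle(\alpha_0 x_0)^*\shuffle(\alpha_1 x_1)^*\}_{w\in X^*,\,\alpha_i\in\C}$ then follows at once, since any finite $\C$-linear relation can be grouped by the pair $(\alpha_0,\alpha_1)$ into a $\ncp{\C}{X}$-linear one, whose coefficients must then vanish in $\ncp{\C}{X}$ (where $X^*$ is itself a $\C$-basis).

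Generation is a bookkeeping step on top of the $\C$-basis $T(k_0,k_1,\alpha_0,\alpha_1) = x_0^{\shuffle k_0}\shuffle (\alpha_0 x_0)^*\shuffle x_1^{\shuffle k_1}\shuffle(\alpha_1 x_1)^*$ of $\ncs{\C^\mathrm{rat}}{x_0}\shuffle \ncs{\C^\mathrm{rat}}{x_1}$ just obtained, combined with the observation that $x_0^{\shuffle k_0}\shuffle x_1^{\shuffle k_1}\in\ncp{\C}{X}$. Indeed, any $P\shuffle T(k_0,k_1,\alpha_0,\alpha_1)$ equals $(P\shuffle x_0^{\shuffle k_0}\shuffle x_1^{\shuffle k_1})\shuffle (\alpha_0 x_0)^*\shuffle(\alpha_1 x_1)^*$, absorbing the polynomial part into the coefficient. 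The two stated equalities $\ncp{\C}{X}\shuffle\ncs{\C^\mathrm{rat}}{x_0}\shuffle\ncs{\C^\mathrm{rat}}{x_1} = \ncp{\C}{X}[\ncs{\C^\mathrm{rat}}{x_0}\shuffle\ncs{\C^\mathrm{rat}}{x_1}] = \ncp{\C}{X}\shuffle SP_\C(X)$ then drop out of the same absorption combined with $SP_\C(X) = \mathrm{span}_\C\{(\alpha_0 x_0)^*\shuffle(\alpha_1 x_1)^*\}$.

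For independence, suppose a packed relation $\sum_{i=1}^N P_i\shuffle(\alpha_0^{(i)} x_0)^*\shuffle(\alpha_1^{(i)} x_1)^* = 0$ with $P_i\in\ncp{\C}{X}\setminus\{0\}$ and pairwise distinct pairs $(\alpha_0^{(i)},\alpha_1^{(i)})$. The ambient shuffle algebra is an integral domain by Lemma \ref{int-dom}, and the commuting shifts $\delta^l_{x_0},\delta^l_{x_1}$ are shuffle-derivations for which $(\alpha x_j)^*$ is an eigenvector of $\delta^l_{x_j}$ of eigenvalue $\alpha$, while $(\alpha x_j)^*\in\ker(\delta^l_{x_k})$ for $k\neq j$. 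I would apply Lemma \ref{diff_lemma} twice. First, grouping the relation by the value $\beta$ of $\alpha_1^{(i)}$ yields $\sum_\beta Q_\beta\shuffle(\beta x_1)^* = 0$ with $Q_\beta\in\ncp{\C}{X}\shuffle\ncs{\C^\mathrm{rat}}{x_0}$. The key verification is that each $Q_\beta$ lies in $R_1[x_1]$, $R_1 = \ker(\delta^l_{x_1})$: invoking Radford's theorem under the order $x_0<x_1$, every Lyndon word of $X$ other than $x_1$ begins with the minimal letter $x_0$ and is thus annihilated by $\delta^l_{x_1}$, giving $\C[\Lyn(X)\setminus\{x_1\}]\subset R_1$ and hence $\ncp{\C}{X} = \C[\Lyn(X)\setminus\{x_1\}]\shuffle\C[x_1]\subset R_1[x_1]$, while $\ncs{\C^\mathrm{rat}}{x_0}\subset R_1$ is trivial. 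Lemma \ref{diff_lemma} then forces $Q_\beta=0$ for every $\beta$. A second application with $d=\delta^l_{x_0}$, $z=x_0$, $e_\alpha=(\alpha x_0)^*$, using Radford's theorem now under the reversed order $x_1<x_0$ (so Lyndon words other than $x_0$ all begin with $x_1$ and are killed by $\delta^l_{x_0}$), gives $\ncp{\C}{X}\subset R_0[x_0]$ and finally $P_i=0$ for all $i$.

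The main difficulty is precisely this symmetric use of Radford's theorem under the two opposite orderings on $X$, which is what embeds $\ncp{\C}{X}$ into the ring $R[z]$ required by Lemma \ref{diff_lemma} at each of the two steps; granted this dual-ordering observation, everything else follows directly from previously established material.
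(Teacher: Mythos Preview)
Your argument is correct. Both generation and independence are handled properly; in particular, the dual-ordering trick (using Radford once with $x_0<x_1$ and once with $x_1<x_0$) is exactly what makes each polynomial coefficient land in the ring $R[z]$ demanded by Lemma~\ref{diff_lemma}, and your observation that the only Lyndon word beginning with the maximal letter is that letter itself is accurate.

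The paper proceeds differently. Rather than iterating Lemma~\ref{diff_lemma} inside the shuffle algebra, it passes once through the Radford isomorphism $(\ncs{\C}{X},\shuffle)\simeq\C[[\Lyn(X)]]$, under which $\ncp{\C}{X}$ becomes the polynomial ring $\C[\Lyn(X)]$ and $(\alpha_0 x_0)^*\shuffle(\alpha_1 x_1)^*$ becomes the product $e^{\alpha_0 x_0}e^{\alpha_1 x_1}$; the required $\ncp{\C}{X}$-independence is then read off from the algebraic independence of $e^{x_0},e^{x_1}$ over $\C[Z]$ already established in the preceding proposition. So the paper trades your two shuffle-derivation steps for one global identification plus the earlier algebraic-independence result. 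Your route is more self-contained (it needs only Lemma~\ref{diff_lemma} and nothing about exponentials), at the cost of the slightly delicate two-ordering observation; the paper's route is a single stroke but leans on prior material.
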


\begin{proof}
We will use a multi-parameter consequence of Lemma \ref{diff_lemma}.

\begin{lemma}
Let $Z$ be an alphabet, and $k$ a field of characteristic zero. Then, the family $\{e^{\alpha  z}\}_{z\in Z\atop\alpha\in k}\subset k[[Z]]$
 is linearly independent over $k[Z]$. 
\end{lemma}

This proves that, in the shuffle algebra the elements
\begin{eqnarray*}
\{(a_0x_0)^*\shuffle (a_1x_1)^*\}_{a_0,a_1\in \C^2}
\end{eqnarray*}
are linearly independent over $\ncp{\C}{X}\simeq \C[\Lyn(X)]$ within\\ $(\ncs{\C}{X},\shuffle,1_{X^*})$.  
\end{proof}  

Now $\Li_\bullet^{(2)}$ is well-defined and this morphism is not into from 
\begin{eqnarray*}
\ncp{\C}{X}\shuffle\ncs{\C^\mathrm{rat}}{x_0}\shuffle \ncs{\C^\mathrm{rat}}{x_1}&=&\cr
\ncp{\C}{X}[\ncs{\C^\mathrm{rat}}{x_0}\shuffle \ncs{\C^\mathrm{rat}}{x_1}]&=&\ncp{\C}{X}\shuffle SP_\C(X),
\end{eqnarray*} 
to $\mathrm{Im}(\Li^{(2)}_\bullet)$.

%\textcolor[rgb]{0.0,0.0,1.0}{\textbf{Determinate $ker( Li_{\bullet})$}}
\begin{proposition}\label{GH_0}
Let $\Li_\bullet^{(1)} : \ncp{\C}{X}[x_0^*, x_1^*,(-x_0)^*]\to \calH(\Omega)$ then
\begin{itemize}
\item[i.] $\mathrm{Im}(\Li_{\bullet}^{(1)})=\calC\{\Li_w\}_{w\in X^*} $.
\item[ii.] $\ker(\Li_{\bullet}^{(1)})$ is the ideal generated by $x_0^{*} \shuffle x_1^{*} - x_1^* + 1_{X^*}$.
\end{itemize}
\end{proposition}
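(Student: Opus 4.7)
The plan is to handle (i) by a direct identification of generators and (ii) by constructing a ``normal-form'' complement in the domain $A := \ncp{\C}{X}[x_0^*, x_1^*,(-x_0)^*]$ to the principal ideal $I$ generated by $r := x_0^* \shuffle x_1^* - x_1^* + 1_{X^*}$, on which $\Li_\bullet^{(1)}$ restricts to an injection. For (i), Example \ref{example1} gives $\Li_\bullet^{(1)}(x_0^*) = z$, $\Li_\bullet^{(1)}((-x_0)^*) = 1/z$, $\Li_\bullet^{(1)}(x_1^*) = 1/(1-z)$, whose $\C$-algebra closure is exactly $\calC$, and $\Li_\bullet^{(1)}(w) = \Li_w$ for $w \in X^*$; hence the image contains $\calC\{\Li_w\}_{w\in X^*}$. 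The reverse inclusion is read off the basis $\{w \shuffle (kx_0)^* \shuffle (lx_1)^*\}_{w,k,l}$ of $A$ obtained earlier: each basis element maps to $\Li_w \cdot z^k (1-z)^{-l} \in \calC\{\Li_w\}_{w\in X^*}$.

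For (ii), first $\Li_\bullet^{(1)}(r) = z/(1-z) - 1/(1-z) + 1 = 0$, so $I \subseteq \ker \Li_\bullet^{(1)}$. For the reverse inclusion I introduce the candidate complement
$$\calN := \mathrm{span}_\C\bigl(\{w \shuffle (kx_0)^*\}_{w\in X^*,\, k\in\Z} \cup \{w \shuffle (lx_1)^*\}_{w\in X^*,\, l\geq 1}\bigr)$$
and aim to establish (a) $A = \calN + I$, and (b) $\Li_\bullet^{(1)}|_\calN$ is injective. Together with $I \subseteq \ker \Li_\bullet^{(1)}$, these yield $A = \calN \oplus I$ and $\ker \Li_\bullet^{(1)} = I$: any $f \in \ker \Li_\bullet^{(1)}$ decomposes as $f = h + g$ with $h \in \calN$, $g \in I$, whence $\Li_\bullet^{(1)}(h) = 0$, so $h = 0$ by (b), so $f = g \in I$.

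For (a), the reduction lifts the partial-fraction decomposition of $z^k(1-z)^{-l}$ in $\calC$ to $A/I$. An easy induction on $k \geq 1$ starting from $x_0^* \shuffle x_1^* \equiv x_1^* - 1_{X^*}$ yields
$$(kx_0)^* \shuffle x_1^* \equiv x_1^* - \sum_{j=0}^{k-1}(jx_0)^* \pmod{I};$$
the observation $(-x_0)^* \shuffle r \in I$ (together with $(-x_0)^*\shuffle x_0^* = 1_{X^*}$) gives $(-x_0)^* \shuffle x_1^* \equiv x_1^* + (-x_0)^* \pmod{I}$, which iterates to $((-m)x_0)^* \shuffle x_1^* \equiv x_1^* + \sum_{i=1}^m ((-i)x_0)^* \pmod{I}$ for $m \geq 1$; and the general case $l \geq 2$ is then handled by a second induction using $(lx_1)^* = x_1^* \shuffle ((l-1)x_1)^*$. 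For (b), any element of $\calN$ maps to $\sum_w \Li_w \cdot f_w$ with $f_w \in \C[z,z^{-1}] + \bigoplus_{l\geq 1}\C\cdot(1-z)^{-l}$; vanishing together with the $\calC$-linear independence of $\{\Li_w\}_{w\in X^*}$ (from \cite{cade,words03}) forces $f_w = 0$ in $\calC$ for every $w$, and the $\C$-basis $\{z^m\}_{m\in\Z} \cup \{(1-z)^{-l}\}_{l\geq 1}$ of $\calC$ (standard partial fractions) kills all coefficients.

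The main obstacle is the clean organization of the reduction step (a): although each individual congruence is routine, one has to arrange a double induction on $(|k|, l)$ to cover every basis element $w \shuffle (kx_0)^* \shuffle (lx_1)^*$ of $A$. Conceptually the content is transparent --- $I$ encodes precisely the single identity $z(1-z)^{-1} = (1-z)^{-1} - 1$, and this identity, together with the already-available shuffle inverse $(-x_0)^*$ of $x_0^*$, is exactly what is needed to reproduce the full partial-fraction decomposition in $A/I$.
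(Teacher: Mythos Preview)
Your proof is correct and follows essentially the same route as the paper's: both establish (i) by identifying the images of the generators $x_0^*,(-x_0)^*,x_1^*,w$, and (ii) by showing $I\subseteq\ker$, reducing every basis element $w\shuffle(x_0^*)^{\shuffle k}\shuffle(x_1^*)^{\shuffle l}$ modulo $I$ to a ``normal form'' with $kl=0$, and then invoking the $\calC$-linear independence of $\{\Li_w\}_{w\in X^*}$ together with the partial-fraction basis of $\calC$ to get injectivity on normal forms. The only cosmetic differences are that the paper packages the abstract ``normal-form $+$ injectivity $\Rightarrow$ kernel'' step into a separate lemma and performs the rewriting one factor of $x_0^*$ or $(-x_0)^*$ at a time, whereas you write down the closed-form congruences for $(kx_0)^*\shuffle x_1^*$ directly and then induct on $l$.
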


\begin{proof}
As $\ncp{\C}{X}[x_0^*,x_1^*,(-x_0)^*]$ admits $\{(x_0^*)^{\shuffle k}\shuffle (x_1^*)^{\shuffle l}\}_{k\in \Z}^{l\in \N}$
as a basis for its structure of $\ncp{\C}{X}$-module, it suffices to remark
\begin{eqnarray*}
\Li_{(x_0^*)^{\shuffle k}\shuffle (x_1^*)^{\shuffle l}}^{(1)}(z)=z^k\times \frac{1}{(1-z)^l}
\end{eqnarray*}
is a generating system of $\calC$.

First of all, we recall the following lemma
\begin{lemma}\label{18012016}
Let $M_1$ and $M_2$ be $K$-modules ($K$ is a unitary ring). Suppose $\phi: M_1 \to M_2$ is a linear mapping.
Let $N \subset \ker(\phi)$ be a submodule. If there is a system of generators in $M_1$, namely $\{g_i\}_{i \in I}$, such that 
\begin{enumerate}
\item For any $i\in I\setminus J$,  $g_i \equiv \Sum_{j \in J \subset I} c_i^j g_j [\mod\,N]$,  ($c_i^j \in K;  \forall j \in J$);
\item $\{\phi(g_j)\}_{j \in J}$ is $K$-free in $M_2$; 
\end{enumerate}
then $N = \ker(\phi)$.
\end{lemma}
\begin{proof}
Suppose $P \in \ker(\phi)$. Then $P \equiv \Sum_{j \in J} p_j g_j [\mod\,N]$ with $\{p_j\}_{j \in J} \subset K$.
Then $0 = \phi(P) =\Sum_{j \in J} p_j \phi( g_j )$.
From the fact that $\{\phi(g_J)\}_{J \in J}$ is $K-$ free on $M_2$, we obtain $p_j = 0$ for any $j \in J$.
This means that $P \in N$. Thus $\ker(\phi) \subset N$. This implies that $N = \ker(\phi)$.
\end{proof}

\begin{figure}[htp]
\begin{center}
\definecolor{qqqqff}{rgb}{0.0,0.0,1.0}
\begin{tikzpicture}[line cap=round,line join=round,>=triangle 45,x=1.0cm,y=1.0cm]
\draw[->,color=black] (-3.0,0.0) -- (3.0,0.0);
\foreach \x in {-2.0,-1.0,1.0,2.0}
\draw[shift={(\x,0)},color=blue] (0pt,2pt) -- (0pt,-2pt);
\draw[->,color=black] (0.0,0.0) -- (0.0,3.0);
\foreach \y in {0.0,1.0,2.0}
\draw[shift={(0,\y)},color=red] (2pt,0pt) -- (-2pt,0pt);
\clip(-3.0,0.0) rectangle (3.0,3.0);
%\draw[color=qqqqff,smooth,samples=100,domain=-4.0:0.9] plot(\x,{(2.0*(\x)+1.0)/((\x)-1.0)});
%\draw[color=qqqqff,smooth,samples=100,domain=1.1:6.0] plot(\x,{(2.0*(\x)+1.0)/((\x)-1.0)});
%\draw[color=qqqqff,domain=0.0:1.0] plot(\x,{(--2.0-0.0*\x)/1.0});
%\draw[color= blue,domain=-3.0:3.0] plot(\x,{(--0.02-0.0*\x)/1.5});
%\draw [domain=-4.0:6.0] plot(\x,{(--2.0-0.0*\2)/1.0});
\begin{scriptsize}
%\draw[color=qqqqff] (-2.46,0.62) node {$f(x) = (2x + 1) / (x - 1)$};
%\draw [fill=black] (1.0,2.0) circle (1.0pt);
%\draw[color=black] (1.0,2.0) node {$\cdot$};
\draw[color=black] (2.2,2.2) node {$(w, \textcolor[rgb]{0.0,0.0,1.0}{l},\textcolor[rgb]{1.0,0.0,0.0}{k}) $};
%\draw[color=black] (1.0,2.0) node {$\circlearrowright$};
\draw[color=black] (0.0,0.0) node {$\bigcirc$};
\draw[color=black] (-0.0,0.2) node {$(1_{X^*}, \times, \times )$};
\draw[color=red] (-0.2,1.8) node {$k$};
\draw[color=black] (0.0,2.0) node {$\cdot$};
%\draw[color=blue] (1.1,0.2) node {$l$};
\draw[color=black] (1.0,0.0) node {$\cdot$};
%\draw[color=black] (-1.0,1.0) node {$\cdot$};
\draw[color=black] (-2.1,2.2) node {$(w, \textcolor[rgb]{0.0,0.0,1.0}{-l},\textcolor[rgb]{1.0,0.0,0.0}{k})$};
%\draw[color=black] (-1.0,1.0) node {$\circlearrowleft$};
%\draw[color=red] (-0.3,2.5) node {$k$};
%\draw[color=red] (-0.15,2.7) node {$$};
\draw[color=blue] (2.0, 0.5) node {$l$};
%\draw[color=blue] (2.7,0.2) node {$l$};
\draw[color=blue] (-2.0,0.2) node {$-l$};
\draw[red] (1,2)--(2,2);
\draw[red] (1,1)--(2,2);
\draw[color=black] (1,2) node {$\triangleleft$};
\draw[color=black] (1,1) node {$\triangleright$};
\draw[color=black] (0.7,2.2) node {$(w, \textcolor[rgb]{0.0,0.0,1.0}{l-1},\textcolor[rgb]{1.0,0.0,0.0}{k})$};
\draw[color=black] (0.7,0.8) node {$(w, \textcolor[rgb]{0.0,0.0,1.0}{l-1},\textcolor[rgb]{1.0,0.0,0.0}{k - 1})$};
\draw[blue] (-1,2)--(-2,2);
\draw[blue] (-2,1)--(-2,2);
\draw[color=black] (-1,2) node {$\triangleright$};
\draw[color=black] (-2,1) node {$\triangledown$};
\draw[color=black] (-0.8,2.2) node {$(w, \textcolor[rgb]{0.0,0.0,1.0}{-l+1},\textcolor[rgb]{1.0,0.0,0.0}{k})$};
\draw[color=black] (-2.2,1.2) node {$(w, \textcolor[rgb]{0.0,0.0,1.0}{-l},\textcolor[rgb]{1.0,0.0,0.0}{k-1})$};
\end{scriptsize}
\end{tikzpicture}
\end{center}
\caption{Rewriting $\mod {\calJ}$ of  $\{w\shuffle(x_0^*)^{\shuffle l}\shuffle(x_1^*)^{\shuffle k}\}_{k\in\N,l \in\Z\atop w\in X^*}$.}
\label{Picture of basis}
\end{figure}

Let now ${\cal J}$ be the ideal generated by 
$x_0^{*} \shuffle x_1^{*} - x_1^* + 1_{X^*}$. It is easily checked, from the following formulas,  (for $l\geq 1$)\footnote{In figure \ref{Picture of basis}, $(w, l, k)$ codes the element $w\shuffle(x_0^*)^{\shuffle l} \shuffle (x_1^*)^{\shuffle k}$.}
\begin{eqnarray*}
w \shuffle x_0^* \shuffle (x_1^*)^{\shuffle l}&\equiv& w \shuffle
(x_1^*)^{\shuffle l}-w \shuffle(x_1^*)^{\shuffle l-1}\ [{\cal J}],\cr
w \shuffle (-x_0)^*\shuffle (x_1^*)^{\shuffle l}&\equiv& w \shuffle
(-x_0)^*\shuffle (x_1^*)^{\shuffle l-1}+w \shuffle(x_1^*)^{\shuffle l}[{\cal J}],    
\end{eqnarray*} 
that one can rewrite $[\mod\, {\cal J}]$ any monomial 
$w \shuffle(x_0^*)^{\shuffle k}\shuffle (x_1^*)^{\shuffle l}$ as a 
linar combination of such monomials with $kl=0$. Then, applying lemma \ref{18012016} to the map $\phi = \Li^{(1)}_{\bullet}$ and the modules
\begin{eqnarray*}
 M_1 = \ncp{\C}{X}[ x_0^*, x_1^*, (-x_0)^*],&M_2 = \calH(\Omega),&N = {\cal J},
\end{eqnarray*}
the families and indices 
\begin{eqnarray*}
\{g_i\} &=& \{w\shuffle (x_1^*)^{\shuffle n} \shuffle (x_0^*)^{\shuffle m} \}_{(w, n,m) \in I},\\
 I &=& X^*\times \N \times \Z, \\ 
 J &=& (X^*\times \N \times \{0\}) \sqcup (X^*\times \{0\} \times \Z),
\end{eqnarray*}
 we have the second point of proposition \ref{GH_0}.
\end{proof}

Of course, we also have  $(x_0^*\shuffle x_1^*-x_1^*+1_{X^*})\subset\ker(\Li^{(2)}_{\bullet})$, but the converse is conjectural.

\section{Applications on polylogarithms}
Let us consider also the following morphisms ${\Im}$ and ${\Theta}$ of algebras 
$\ncp{\C}{X}\rightarrow \mathrm{End}(\calC\{\Li_w\})$ defined by
\begin{itemize}
\item[i.] ${\Im}(w)=\mathrm{Id}$ and ${\Theta}(w)=\mathrm{Id}$, if $w=1_{X^*}$.

\item[ii.] ${\Im}(w)={\Im}(v){\iota_i}$ and ${\Theta}(w)={\Theta}(v){\theta_i}$, if $w=vx_i,x_i\in X,v\in X^*$.
\end{itemize}

For any $n\ge0$ and $u\in X^*,f,g\in\calC\{\Li_w\}_{w\in X^*}$, one has \cite{GHMposter, QED}
\begin{eqnarray*}
\partial_z^n &=& \Sum_{w\in X^n}\mu\circ(\Theta\otimes\Theta)[\Delta_{\shuffle}(w)],\\
\Theta(u)(fg) &=& \mu\circ(\Theta\otimes\Theta)[\Delta_{\shuffle}(u)]\circ(f\otimes g).
\end{eqnarray*}
By extension to complex coefficients, we obtain
\begin{eqnarray*}
\calH_{\tt conc}&\cong&(\C\langle\Theta(X)\rangle,{\tt conc},\mathrm{Id},\Delta_{\shuffle},\epsilon),\\
\calH_{\shuffle}&\cong&(\C\langle\Im(X)\rangle,\shuffle,\mathrm{Id},\Delta_{\tt conc},\epsilon).
\end{eqnarray*}
Hence,
\newpage
\begin{theorem}[derivations and automorphisms]
\item \quad Let $P,Q\in\CX$  (resp. $\C[x_0^*,(-x_0)^*,x_1^*]\shuffle\CX$), $T\in\LCXX$  (resp. $\LCX$).
Then $\Theta(T)$ is a derivation in $(\C\{\Li_w\}_{w\in X^*},\times,1)$ (resp. $(\calC\{\Li_w\}_{w\in X^*},\times,1_{\Omega})$)
and $\exp(t\Theta(T))$ is then a one-parameter group of automorphisms of 
\begin{eqnarray*}
({\mathbb C}\{\Li_w\}_{w\in X^*},\times,1_{\Omega})&\mbox{(resp.}&(\calC\{\Li_w\}_{w\in X^*},\times,1_{\Omega})).
\end{eqnarray*}
\end{theorem}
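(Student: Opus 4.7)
The cornerstone is the formula
\begin{equation*}
\Theta(u)(fg) = \mu \circ (\Theta \otimes \Theta)[\Delta_{\shuffle}(u)](f \otimes g),
\end{equation*}
already recorded for $u \in \CX$ and $f, g \in \calC\{\Li_w\}_{w\in X^*}$. The first step is to extend this identity to the relevant space of Lie elements $T \in \LCXX$ (resp.\ $\LCX$). In the series case I would do this by linearity combined with the weight grading on $\CXX$: writing $T = \sum_{n \ge 1} T_n$ with $T_n$ homogeneous of weight $n$, the action on any fixed $f$ in the polynomial algebra $\C\{\Li_w\}_{w\in X^*}$ is a locally finite sum, because each $\theta_i$ drops the weight of an $\Li_w$ by one.

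The decisive algebraic input is the Hopf-theoretic fact that a Lie element is primitive for the shuffle coproduct: $\Delta_{\shuffle}(T) = T \otimes 1_{X^*} + 1_{X^*} \otimes T$. Substituting this into the formula above and using $\Theta(1_{X^*}) = \mathrm{Id}$ yields instantly the Leibniz rule
\begin{equation*}
\Theta(T)(fg) = \Theta(T)(f) \cdot g + f \cdot \Theta(T)(g),
\end{equation*}
so $\Theta(T)$ is a derivation of the corresponding algebra of polylogarithms.

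For the exponential statement I would follow the template of the preceding theorem on $\exp(t\delta)$: once $\Theta(T)$ is locally nilpotent, $\exp(t \Theta(T)) = \sum_{n \ge 0} \tfrac{t^n}{n!} \Theta(T)^n$ is well defined pointwise; the Leibniz rule lifts through a binomial expansion of $\Theta(T)^n(fg)$ to the assertion that $\exp(t\Theta(T))$ is an algebra morphism; and commutation of the operators for distinct values of $t$ gives the one-parameter group law $\exp((s+t)\Theta(T)) = \exp(s\Theta(T))\exp(t\Theta(T))$. On $\C\{\Li_w\}_{w\in X^*}$ local nilpotence is automatic, since the algebra is filtered by weight and $\LCXX$ has no weight-zero component.

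The main obstacle is to carry the exponential construction across to the enlarged algebra $\calC\{\Li_w\}_{w\in X^*}$: here $\theta_0$ and $\theta_1$ no longer lower any obvious filtration (already $\theta_0(z) = z$), so $\Theta(T)$ need not be locally nilpotent. I would address this by using the tensor factorization $\calC\{\Li_w\}_{w\in X^*} = \calC \otimes_{\C} \C\{\Li_w\}_{w\in X^*}$ recorded earlier together with the Leibniz rule to split $\Theta(T)$ as a sum of operators of the form $D_1 \otimes \mathrm{Id} + \mathrm{Id} \otimes D_2$. The formal exponential then factorizes as $\exp(tD_1) \otimes \exp(tD_2)$, and convergence on the first tensor factor is obtained from the continuity of $\theta_0, \theta_1$ in the Frechet topology on $\calH(\Omega)$ established in the preliminary section, while on the second factor the local-nilpotence argument still applies. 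Once convergence is secured, the derivation-to-automorphism mechanism carries over verbatim.
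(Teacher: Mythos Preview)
Your derivation argument is correct and essentially the paper's own: the paper writes it via the residual, using the intertwining $\Theta(T)\Li_P=\Li_{P\rg T}$ and the fact (its Theorem~2) that $P\mapsto P\rg T$ is a shuffle derivation when $T$ is Lie; your use of the coproduct formula together with primitivity $\Delta_{\shuffle}(T)=T\otimes 1+1\otimes T$ is the dual statement of the same identity. The first-case exponential (local nilpotence on $\C\{\Li_w\}_{w\in X^*}$ via the weight filtration) is also fine, and indeed this is what the paper's Theorems~1 and~2, once transported through $\Li_\bullet$, give.

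The genuine gap is in your treatment of the second case. The proposed splitting $\Theta(T)=D_1\otimes\mathrm{Id}+\mathrm{Id}\otimes D_2$ along $\calC\{\Li_w\}_{w\in X^*}=\calC\otimes_\C\C\{\Li_w\}_{w\in X^*}$ does not exist: the factor $\C\{\Li_w\}_{w\in X^*}$ is \emph{not} stable under $\theta_0$ or $\theta_1$ individually (for instance $\theta_0\Li_{x_1w}=\lambda\,\Li_w$ lands in $\calC\{\Li_w\}$, not in $\C\{\Li_w\}$), so a derivation of the tensor product restricted to $1\otimes\C\{\Li_w\}$ does not stay there, and no operator $D_2$ on the second factor can be isolated. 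Correspondingly, your factorization $\exp(tD_1)\otimes\exp(tD_2)$ is unavailable. Note also that on the series side $\delta^r_T$ is \emph{not} locally nilpotent on $\C[x_0^*,(-x_0)^*,x_1^*]\shuffle\CX$ (already $x_0^*\rg x_0=x_0^*$), so one cannot simply re-run the first-case argument through $\Li_\bullet^{(1)}$ either. The paper's own proof stops at the Leibniz rule and does not spell out the exponential in this extended setting; if you want to complete it, a cleaner route than tensor splitting is to stay on the series side, observe that $\delta^r_T$ (for $T\in\LCX$) acts on each star $(\alpha x_i)^*$ by scalar multiplication and on $\CX$ locally nilpotently, and assemble $\exp(t\delta^r_T)$ on the $\ncp{\C}{X}$-basis $\{(\alpha_0x_0)^*\shuffle(\alpha_1x_1)^*\}$ directly before pushing forward by $\Li_\bullet^{(1)}$.
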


\begin{proof}
Because $\Li_{P\shuffle Q}=\Li_P\Li_Q,\Theta(T)\Li_{P\shuffle Q}=\Li_{(P\shuffle Q)\rg T}$ and then
$\Theta(T)(\Li_P\Li_Q)=\Li_{(P\shuffle Q)\rg T}=\Li_{(P\rg T)\shuffle Q+P\shuffle(Q\rg T)} =\Li_{(P\rg T)\shuffle Q}+\Li_{P\shuffle(Q\rg T)}=(\Theta(T)\Li_P)\Li_Q+\Li_P(\Theta(T)\Li_Q).$
\end{proof}

\begin{theorem}[extension of $\Li_{\bullet}$]
\item \quad The following map is surjective
\begin{eqnarray*}
({\mathbb C}[x_0^*]\shuffle{\mathbb C}[(-x_0)^*]\shuffle{\mathbb C}[x_1^*]\shuffle\CX,\shuffle,1_{X^*})
&\rightarrow&(\calC\{\Li_w\}_{w\in X^*},\times,1),\\
T&\mapsto&\Im(T)1_{\Omega}.
\end{eqnarray*}
\end{theorem}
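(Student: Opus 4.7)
The plan is to reduce the theorem to Proposition~\ref{GH_0} (which already identifies $\mathrm{Im}(\Li^{(1)}_\bullet)=\calC\{\Li_w\}_{w\in X^*}$) by showing that $T\mapsto\Im(T)1_\Omega$ coincides with $\Li^{(1)}_\bullet$ on the domain at hand; surjectivity then follows at once.

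First I would identify the source space $\C[x_0^*]\shuffle\C[(-x_0)^*]\shuffle\C[x_1^*]\shuffle\CX$ with the shuffle-adjunction $\ncp{\C}{X}[x_0^*,(-x_0)^*,x_1^*]$ of Section~\ref{shuff_ext}: each of the four factors is a unital shuffle subalgebra of $(\ncs{\C^{\mathrm{rat}}}{X},\shuffle,1_{X^*})$, so their shuffle-product is precisely the shuffle-subalgebra they generate. By the corollary of Section~\ref{shuff_ext}, this space carries the $\C$-basis $\mathcal{F}=\{w\shuffle(x_0^*)^{\shuffle k}\shuffle(x_1^*)^{\shuffle l}\}_{w\in X^*,\,(k,l)\in\Z\times\N}$.

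Next I would verify agreement of the two maps on $\mathcal{F}$. On a word $w\in X^*$ one immediately has $\Im(w)1_\Omega=\Li_w=\Li^{(1)}_\bullet(w)$ by the very definition of $\Li_\bullet$ via iterated integrals. For the star part I use the shuffle identity $(\alpha x_i)^*\shuffle(\beta x_i)^*=((\alpha+\beta)x_i)^*$, so that $(x_0^*)^{\shuffle k}=(kx_0)^*$ and $(x_1^*)^{\shuffle l}=(lx_1)^*$; applying $\Im$ term by term, with convergence controlled by the index map $\ind$ of Definition~\ref{def1}, one obtains
\begin{eqnarray*}
\Im((kx_0)^*)1_\Omega=\Sum_{n\ge 0}\Frac{(k\log z)^n}{n!}=z^k,&&\Im((lx_1)^*)1_\Omega=(1-z)^{-l},
\end{eqnarray*}
matching $\Li^{(1)}_\bullet$ on these generators. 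For a generic element of $\mathcal{F}$, multiplicativity would come from extending the classical Friedrichs-type identity $\Im(P\shuffle Q)1_\Omega=(\Im(P)1_\Omega)(\Im(Q)1_\Omega)$ (known on polynomials from $\Li_{u\shuffle v}=\Li_u\Li_v$) to star-series by summability. Both maps are then shuffle morphisms agreeing on a generating family, hence coincide on the whole domain.

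The main obstacle is precisely this summability step: one must rigorously extend $\Im$ from $\ncp{\C}{X}$ to the rational star elements, verify that its values at $1_\Omega$ match the absolutely convergent expansions of the target functions along the basis $\calB$ of $\calC\{\Li_w\}_{w\in X^*}$, and check that the extended map still respects the shuffle product (this is where one has to be careful, since for infinite star-series the naive Chen-type factorisation is no longer termwise). Once this is achieved, the equality $\Im(T)1_\Omega=\Li^{(1)}_\bullet(T)$ holds on $\mathcal{F}$, hence everywhere by linearity, and Proposition~\ref{GH_0} delivers the desired surjectivity onto $\calC\{\Li_w\}_{w\in X^*}$.
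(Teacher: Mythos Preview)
The paper supplies no separate proof of this theorem: it is stated and left to rest on the earlier construction of $\Li^{(1)}_\bullet$ together with Proposition~\ref{GH_0}(i). Your reduction to that proposition is therefore exactly the intended argument, and your identification of the domain with $\ncp{\C}{X}[x_0^*,(-x_0)^*,x_1^*]$ via the corollary of Section~\ref{shuff_ext} is correct.

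The point you flag as ``the main obstacle'' is genuine and is not resolved in the paper either: $\Im$ is introduced only as a concatenation morphism $\ncp{\C}{X}\to\mathrm{End}(\calC\{\Li_w\}_{w\in X^*})$, so $\Im((kx_0)^*)$ is not literally given as an endomorphism of that space --- the partial sums $\sum_{n\le N}k^n\iota_0^n$ applied to $1_\Omega$ converge in $\calH(\Omega)$, not inside $\calC\{\Li_w\}_{w\in X^*}$, and only the limit $z^k$ happens to fall back into $\calC$. The most economical reading of the theorem is that the map $T\mapsto\Im(T)1_\Omega$ on the star-extended algebra is \emph{defined} to coincide with $\Li^{(1)}_\bullet$, whereupon surjectivity is literally Proposition~\ref{GH_0}(i). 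Your outline of how one would instead justify the extension analytically (summability in $\calH(\Omega)$, then checking that the shuffle-morphism identity $\Im(P\shuffle Q)1_\Omega=(\Im(P)1_\Omega)(\Im(Q)1_\Omega)$ survives the limit) is the right plan if a self-contained operator-level argument is desired, and it goes beyond what the paper actually provides.
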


One has, for any $u\in Y^*,$
\begin{eqnarray*}
\Li^-_{y_{s_1}u}=\theta_0^{s_1}(\theta_0\iota_1)\Li^-_u=\theta_0^{s_1}(\lambda\Li^-_u)
=\Sum_{k_1=0}^{s_1}\binom{s_1}{k_1}(\theta_0^{k_1}\lambda)(\theta_0^{s_1-k_1}\Li^-_u).
\end{eqnarray*}
%\newpage
Hence, successively \cite{GHMposter},
\begin{eqnarray*}
\Li^-_{y_{s_1}\ldots y_{s_r}}
=\sum_{k_1=0}^{s_1}\sum_{k_2=0}^{s_1+s_2-k_1}\ldots\sum_{k_r=0}^{(s_1+\ldots+s_r)-\atop(k_1+\ldots+k_{r-1})}
\binom{s_1}{k_1}\binom{s_1+s_2-k_1}{k_2}\ldots\cr
\binom{s_1+\ldots+s_r-k_1-\ldots-k_{r-1}}{k_r}(\theta_0^{k_1}\lambda)(\theta_0^{k_2}\lambda)\ldots(\theta_0^{k_r}\lambda),
\end{eqnarray*}
where
\begin{eqnarray*}
\theta_0^{k_i}\lambda(z)=\left\{
\begin{array}{rcl}
\lambda(z),&\mbox{if}&{k_i}=0,\\
\Frac1{1-z}\Sum_{j=1}^{k_i}S_2({k_i},j)j!{\lambda^j(z)},&\mbox{if}&k_i>0.
\end{array}\right.
\end{eqnarray*}
Hence,
\begin{eqnarray*}
\Li^-_{y_{s_1}\ldots y_{s_r}}=\Li_T=\Im(T)1_{\Omega},
\end{eqnarray*}
where $T$ is the following exchangeable rational series
\begin{eqnarray*}
T&=&\sum_{k_1=0}^{s_1}\sum_{k_2=0}^{s_1+s_2-k_1}\ldots\sum_{k_r=0}^{(s_1+\ldots+s_r)-\atop(k_1+\ldots+k_{r-1})}
\binom{s_1}{k_1}\binom{s_1+s_2-k_1}{k_2}\ldots\cr
&&\binom{s_1+\ldots+s_r-k_1-\ldots-k_{r-1}}{k_r}T_{k_1}\shuffle\ldots\shuffle T_{k_r},\\
T_{k_i}&=&\left\{
\begin{array}{rcl}
(x_0+x_1)^*,&\mbox{if}&{k_i}=0,\\
x_1^*\shuffle\Sum_{j=1}^{k_i}S_2({k_i},j)j!((x_0+x_1)^*)^{\shuffle j},&\mbox{if}&k_i>0.
\end{array}\right.
\end{eqnarray*}
Due to surjectivity of $\Li_{\bullet}$, from ${\mathbb C}[x_0^*]\shuffle{\mathbb C}[(-x_0)^*]
\shuffle{\mathbb C}[x_1^*]\shuffle\CX$ to $\calC\{\Li_w\}_{w\in X^*}$,
one also has
\begin{eqnarray*}
\Li^-_{y_{s_1}\ldots y_{s_r}}=\Li_R=\Im(R)1_{\Omega},
\end{eqnarray*}
where $R$ is the following exchangeable rational series
\begin{eqnarray*}
R&=&\sum_{k_1=0}^{s_1}\sum_{k_2=0}^{s_1+s_2-k_1}\ldots\sum_{k_r=0}^{(s_1+\ldots+s_r)-\atop(k_1+\ldots+k_{r-1})}
\binom{s_1}{k_1}\binom{s_1+s_2-k_1}{k_2}\ldots\cr
&&\binom{s_1+\ldots+s_r-k_1-\ldots-k_{r-1}}{k_r}R_{k_1}\shuffle\ldots\shuffle R_{k_r},\\
R_{k_i}&=&\left\{
\begin{array}{rcl}
x_0^*\shuffle x_1^*,&\mbox{if}&{k_i}=0,\\
x_1^*\shuffle\Sum_{j=1}^{k_i}S_2({k_i},j)j!(x_0^*\shuffle x_1^*)^{\shuffle j},&\mbox{if}&k_i>0,
\end{array}\right.
\end{eqnarray*}
and again  (see Example \ref{example1})
\begin{eqnarray*}
\Li^-_{y_{s_1}\ldots y_{s_r}}=\Li_F=\Im(F)1_{\Omega},
\end{eqnarray*}
where $F$ is the following rational series on $x_1$
\begin{eqnarray*}
F&=&\sum_{k_1=0}^{s_1}\sum_{k_2=0}^{s_1+s_2-k_1}\ldots\sum_{k_r=0}^{(s_1+\ldots+s_r)-\atop(k_1+\ldots+k_{r-1})}
\binom{s_1}{k_1}\binom{s_1+s_2-k_1}{k_2}\ldots\cr
&&\binom{s_1+\ldots+s_r-k_1-\ldots-k_{r-1}}{k_r}F_{k_1}\shuffle\ldots\shuffle F_{k_r},\\
F_{k_i}&=&\left\{
\begin{array}{rcl}
x_1^*-1_{X^*},&\mbox{if}&{k_i}=0,\\
x_1^*\shuffle\Sum_{j=1}^{k_i}S_2({k_i},j)j!(x_1^*-1_{X^*})^{\shuffle j},&\mbox{if}&k_i>0.
\end{array}\right.
\end{eqnarray*}
Since $\Im(x_1^*)1_{\Omega}=1/(1-z)$ then this proves once again that \cite{GHMposter,QED}
\begin{eqnarray*}
\Li^-_{y_{s_1}\ldots y_{s_r}}=\Li_T=\L_R=\Li_F\in\C[1/(1-z)]\subsetneq\calC.
\end{eqnarray*}

One can deduce finally that
\begin{corollary}
\begin{eqnarray*}
\calC\{\Li_{w}\}_{w\in  
X^*}&\supsetneq&\C[1/(1-z)]\{\Li_{w}\}_{w\in X^*}\\
&=&\mathrm{span}_{\C}\left\{{\Sum_{n_1>\ldots>n_r>0} n_1^{s_1}\ldots  
n_r^{s_r}\; z^{n_1}\vert
\atop(s_1,\ldots,s_r)\in\Z^r,r\in\N_+}\right\}.
\end{eqnarray*}
\end{corollary}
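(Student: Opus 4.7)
The statement has two parts: a strict inclusion of $\C$-vector spaces and an equality with the indicated span. I would prove each separately.

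\textbf{Strict inclusion.} Using the $\calC$-linear independence of $\{\Li_w\}_{w\in X^*}$ recalled in the introduction \cite{cade,words03}, every $f\in\calC\{\Li_w\}_{w\in X^*}$ has a unique expansion $f=\sum_w c_w\Li_w$ with $c_w\in\calC$, and $f$ belongs to $\C[1/(1-z)]\{\Li_w\}_{w\in X^*}$ if and only if every $c_w$ belongs to $\C[1/(1-z)]$. It then suffices to produce one element of $\calC\setminus\C[1/(1-z)]$; the element $z$ works, since $z=P(1/(1-z))$ with $P\in\C[u]$ would give, via $u=1/(1-z)$ and $z=1-u^{-1}$, the impossible polynomial identity $uP(u)=u-1$. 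Hence $z=z\cdot\Li_{1_{X^*}}$ lies in $\calC\{\Li_w\}\setminus\C[1/(1-z)]\{\Li_w\}$.

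\textbf{Equality, $\supseteq$ direction.} I would split on the signs of $(s_1,\ldots,s_r)$. When $s_i\in\N$ for all $i$, the sum is $\Li^-_{y_{s_1}\cdots y_{s_r}}$, which by the calculation immediately preceding the corollary equals $\Li_F=\Im(F)\,1_\Omega$ for some $F\in\C[x_1^*]\shuffle\CX$; hence it lies in $\C[1/(1-z)]\cdot 1_\Omega$. When all $s_i\leq -1$, setting $t_i=-s_i\geq 1$ identifies the sum with the classical polylogarithm $\Li_{x_0^{t_1-1}x_1\cdots x_0^{t_r-1}x_1}\in\{\Li_w\}_{w\in X^*}$. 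For mixed signs I would set up a recurrence on $r$: writing $f_r(z)=\sum_{n_1>\cdots>n_r>0}n_1^{s_1}\cdots n_r^{s_r}z^{n_1}$, the partial-sum identity $g(z)/(1-z)=\sum_N(\sum_{m\leq N}[z^m]g)z^N$ gives
\begin{equation*}
f_r=\theta_0^{s_1}(\lambda f_{r-1})\ \text{if }s_1\geq 0,\qquad f_r=\iota_0^{-s_1}(\lambda f_{r-1})\ \text{if }s_1\leq -1,
\end{equation*}
with $\lambda=z/(1-z)=1/(1-z)-1\in\C[1/(1-z)]$. Stability of $\C[1/(1-z)]\{\Li_w\}$ under multiplication by $\lambda$ is immediate, and stability under $\theta_0$ follows from the introduction's identities $\theta_0\Li_{x_0 v}=\Li_v$, $\theta_0\Li_{x_1 v}=\lambda\Li_v$ together with $\theta_0((1-z)^{-k})=k\lambda(1-z)^{-k}$. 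Consequently the recurrence keeps $f_r$ inside the module in the mixed case as well.

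\textbf{Equality, $\subseteq$ direction, and main obstacle.} Any generator $(1-z)^{-k}\Li_w$ (with $\Li_w=\sum z^{n_1}/(n_1^{s_1}\cdots n_r^{s_r})$) admits the Cauchy expansion
\begin{equation*}
(1-z)^{-k}\Li_w=\sum_{N\geq n_1>\cdots>n_r>0}\binom{N-n_1+k-1}{k-1}\frac{z^N}{n_1^{s_1}\cdots n_r^{s_r}},
\end{equation*}
and splitting $N=n_1$ from $N>n_1$ (to restore strict inequalities) followed by expanding the binomial as a polynomial in $N$ and $n_1$ realises this as a finite $\C$-linear combination of span generators (the Laurent-polynomial factor in $n_1$ redistributes the first exponent among the $s_1'\in\Z$). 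The main obstacle I anticipate is the mixed-sign part of $\supseteq$: one must verify at each inductive step that $\iota_0$ applied to the already-built element $\lambda f_{r-1}$ (which vanishes at $0$) lands in $\C[1/(1-z)]\{\Li_w\}$, and this ultimately reduces to an explicit partial-fraction computation of $\iota_0((1-z)^{-k})$ combined with the fact that $\iota_0\Li_v=\Li_{x_0 v}$.
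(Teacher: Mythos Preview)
The paper gives no argument for this corollary beyond ``One can deduce finally that'', so your outline is already more detailed than what appears there. The strict inclusion is fine. Two real issues remain, one in each half of the equality.

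\textbf{The $\subseteq$ direction does not go through for all $w\in X^*$.} Your Cauchy--product argument tacitly assumes $\Li_w(z)=\sum_{n_1>\cdots>n_r>0} z^{n_1}/(n_1^{t_1}\cdots n_r^{t_r})$, which only holds for $w\in X^*x_1$. For $w=x_0$ one has $\Li_{x_0}(z)=\log z$, and every generator in the right-hand span is a convergent power series $\sum_{n\ge 1} a_n z^n$; a finite $\C$-linear combination of such series is again analytic at $0$, so $\log z$ cannot lie in the span. Likewise $1_\Omega=\Li_{1_{X^*}}$ is not in the span since $r\ge 1$ forces every generator to vanish at $0$. Thus the equality, read literally over all $w\in X^*$, fails; the statement should be understood with $w$ restricted to $X^*x_1$ (and the constant $1_\Omega$ adjoined to the span). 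You should flag this restriction explicitly rather than let it slip in through the assumed form of $\Li_w$.

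\textbf{The mixed-sign step in the $\supseteq$ direction needs more than you propose.} Your reduction ``partial-fraction computation of $\iota_0((1-z)^{-k})$ combined with $\iota_0\Li_v=\Li_{x_0v}$'' is not enough, because $\iota_0$ is not $\C[1/(1-z)]$-linear: you cannot separate $\iota_0\big((1-z)^{-k}\Li_v\big)$ into a coefficient part and a $\Li_v$ part. What is actually required is a direct treatment of
\[
\int_0^z \frac{\Li_v(s)}{s(1-s)^k}\,ds
= \Li_{x_0v}(z)+\sum_{j=1}^{k}\int_0^z \frac{\Li_v(s)}{(1-s)^j}\,ds,
\]
using the partial-fraction identity $\dfrac{1}{s(1-s)^k}=\dfrac{1}{s}+\sum_{j=1}^k\dfrac{1}{(1-s)^j}$. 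The $j=1$ term is $\iota_1\Li_v=\Li_{x_1v}$; for $j\ge 2$ an integration by parts lowers $j$ by one at the cost of replacing $\Li_v$ by $(1-z)^{-1}\theta_0\Li_v$ or $\theta_1\Li_v$ and producing a boundary term in $\C[1/(1-z)]\{\Li_w\}$. This gives a terminating double induction on $(k,|v|)$ that stays inside $\C[1/(1-z)]\{\Li_w\}_{w\in X^*x_1\cup\{1_{X^*}\}}$, but it is a genuine computation that your sketch does not yet contain.
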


\section{Conclusion}
We have studied the structure of the algebra 
\begin{equation*}
\ncp{\C}{X}\shuffle\ncs{\C^\mathrm{rat}}{x_0}\shuffle  
\ncs{\C^\mathrm{rat}}{x_1},
\end{equation*}
 where $X = \{x_0, x_1\}$ is an alphabet. We have also considered the ways for denoting the polylogarithms. By the results on the algebra $\ncp{\C}{X}\shuffle\ncs{\C^\mathrm{rat}}{x_0}\shuffle \ncs{\C^\mathrm{rat}}{x_1}$, we have given an extension of the polylogarithms and have obtained {\it polylogarithmic transseries}
\begin{eqnarray*}
 \C\{z^\alpha (1-z)^\beta\Li_w\}_{w\in X^*\atop\alpha,\beta\in\C}.
\end{eqnarray*}
With this extension, we have constructed several shuffle bases of the algebra of polylogarithms. In the special case of the ``Laurent subalgebra'' 
\begin{eqnarray*}
(\ncp{\C}{X},\shuffle,1_{X^*})[x_0^*,(-x_0^*),x_1^*]\subset  
\ncp{\C}{X}\shuffle\ncs{\C^\mathrm{rat}}{x_0}\shuffle  
\ncs{\C^\mathrm{rat}}{x_1},
\end{eqnarray*}
we have completely characterized the kernel of the polylogarithmic map $\Li_\bullet$, providing a rewriting process which terminates to a normal form.

%the Bibliography in this case
% You must have a proper ".bib" file
%  and remember to run:
% latex bibtex latex latex
% to resolve all references
%
% ACM needs 'a single self-contained file'!
%
%APPENDICES are optional
%\balancecolumns
%\appendix
%Appendix A

%\balancecolumns % GM June 2007
% That's all folks!
\end{document}